\newcommand{\Draft}{1}
\renewcommand{\Draft}{1}}
\renewcommand{\Draft}{0}}
\newcommand{\bb}[1]{{\mathbb{#1}}}
\newcommand\cyr{%
\renewcommand\rmdefault{wncyr}%
\renewcommand\sfdefault{wncyss}%
\renewcommand\encodingdefault{OT2}%
\normalfont
\selectfont}
\DeclareTextFontCommand{\textcyr}{\cyr}
\DeclareMathOperator{\tr}{tr}
\newlength{\maxlabwidth}
\numberwithin{equation}{section}
\theoremstyle{plain}
	\newtheorem{lemma}{Lemma}[section]
	\newtheorem{proposition}[lemma]{Proposition}
	\newtheorem{theorem}[lemma]{Theorem}
	\newtheorem{corollary}[lemma]{Corollary}
	\newtheorem{ntheoreM}[lemma]{}
\theoremstyle{definition}
	\newtheorem{definitioN}[lemma]{Definition}
	\newtheorem{ndefinitioN}[lemma]{}
\theoremstyle{remark}
	\newtheorem{remarK}[lemma]{Remark}
	\newtheorem{examplE}[lemma]{Example}
	\newtheorem{nremarK}[lemma]{}
\newcommand{\thlab}[1]{\thlabel{#1}\label{#1 }}
\renewcommand{\qedsymbol}{\raisebox{-2pt}{\large\ding{113}}}% Box mit Schatten
\newcommand{\defendsymbol}{$\lozenge$}
\newcommand{\qedsymbolsave}{\qedsymbol}
\newenvironment{definition}{\begin{definitioN}}{
	\renewcommand{\qedsymbolsave}{\qedsymbol}\renewcommand{\qedsymbol}{\defendsymbol}
	\popQED{\qed}\renewcommand{\qedsymbol}{\qedsymbolsave}\end{definitioN}}
\newenvironment{ndefinition}[1]{\begin{ndefinitioN}\textbf{\!{#1:}\ }}{
	\renewcommand{\qedsymbolsave}{\qedsymbol}\renewcommand{\qedsymbol}{\defendsymbol}
	\popQED{\qed}\renewcommand{\qedsymbol}{\qedsymbolsave}\end{ndefinitioN}}
\newenvironment{remark}{\begin{remarK}}{
	\renewcommand{\qedsymbolsave}{\qedsymbol}\renewcommand{\qedsymbol}{\defendsymbol}
	\popQED{\qed}\renewcommand{\qedsymbol}{\qedsymbolsave}\end{remarK}}
\newenvironment{example}{\begin{examplE}}{
	\renewcommand{\qedsymbolsave}{\qedsymbol}\renewcommand{\qedsymbol}{\defendsymbol}
	\popQED{\qed}\renewcommand{\qedsymbol}{\qedsymbolsave}\end{examplE}}
\DeclareMathOperator{\Cov}{Cov}
\begin{document}

\ifthenelse{\Draft=1}{\pagenumbering{roman}}

\begin{flushleft}
	{\Large\bf Bounds on order of indeterminate moment\\[2mm] sequences}
	\\[5mm]
	\textsc{
	Raphael Pruckner
	\,\ $\ast$\,\ 
	Roman Romanov
	\,\ $\ast$\,\ 
	Harald Woracek
		\hspace*{-14pt}
		\renewcommand{\thefootnote}{\fnsymbol{footnote}}
		\setcounter{footnote}{2}
		\footnote{This work was supported by a joint project of the Austrian Science Fund (FWF, I\,1536--N25) 
			and the Russian Foundation for Basic Research (RFBR, 13-01-91002-ANF).}
		\renewcommand{\thefootnote}{\arabic{footnote}}
		\setcounter{footnote}{0}
	}
	\\[6mm]
	{\small
	\textbf{Abstract:}
	We investigate the order $\rho$ of the four entire functions in the Nevanlinna matrix of an indeterminate
  Hamburger moment sequence. We give an upper estimate for $\rho$ which is explicit in terms of the parameters of the canonical system associated with
  the moment sequence via its three-term recurrence.
  Under a weak regularity assumption this estimate coincides with a lower estimate, and hence $\rho$ becomes computable.
  Dropping the regularity assumption leads to examples where upper and lower bounds do not coincide and differ from the order.
  In particular we provide examples for which the order is different from its lower estimate due to M.S.Liv\v{s}ic.
	\\[3mm]
	{\bf AMS MSC 2010:} 44A60, 30D15, 37J99, 47B36, 34L20
	\\
	{\bf Keywords:} Indeterminate moment problem, canonical system, order of entire function, asymptotic of eigenvalues
	}
\end{flushleft}

\ifthenelse{\Draft=1}{
\fbox{
\parbox{100mm}{
\hspace*{0pt}\\
\centerline{{\Large\ding{45}}\quad\,{\large\sc Draft}\quad{\Large\ding{45}}}
\hspace*{0pt}\\[-3mm]
In final version:\qquad \ding{233} disable package \textsf{showkeys}.\\
Download latest version of bibtex-database-woracek.bib:\\
\centerline{\textsf{asc.tuwien.ac.at/\~{}funkana/woracek/bibtex-database-woracek.bib}}
\\[2mm]
\textcircledP\ Preliminary version Wed 16 Dec 2015 23:04
\\[2mm]
\ding{229}\quad to obtain proper eepic-pictures use latex/dvipdf to compile
\\[-2mm]
}
}
\IfFileExists{bibtex-database-woracek.bib}
{}
{
\hspace*{0pt}\\[4mm]\centerline{\large\sf !!! File bibtex-database-woracek.bib does not exist !!!}
\\[-4mm]
}
\tableofcontents
\listoftodos
%	violet!60, violet!30	......	references (cross-, bibliography)
%	blue!50, blue!20	......	Editorial stuff (specific, general)
%	yellow			......	Import from literature
%	red,orange,red!20	......	content (unknown,nontrivial,trivial)
%
\newpage
\pagenumbering{arabic}
\setcounter{page}{1}
}
{}

%---------
%   TEXTBODY
%---------

%
%
%
\section{Introduction}

Let $ (s_n)_{n=0}^\infty$ be a of sequence real numbers, and assume that the Hamburger power moment problem for this sequence 
is solvable and indeterminate. Then the totality of all positive Borel measures on $\bb R$ with 
power moments $(s_n)_{n=0}^\infty$ is parameterised via 
their Cauchy-transforms with help of an entire $2\!\times\!2$-matrix function called the \emph{Nevanlinna matrix} of the moment sequence. 
A classical result of M.Riesz is that the entries of this matrix are entire functions of minimal exponential type, cf.\ \cite{riesz:1923a}. 
Further, all the entries of the Nevanlinna matrix have the same order, cf.\ \cite{baranov.woracek:smsub,berg.pedersen:1994}. 
We denote this common number by $\rho((s_n)_{n=0}^\infty)$ and call it the \emph{order of the moment sequence}. 

The order can be computed for several moment sequences for which the moment problem is explicitly solvable. 
As for theorems rather than examples, the only known estimate for $\rho((s_n)_{n=0}^\infty)$ in terms of the sequence $(s_n)_{n=0}^\infty$ 
itself (and  -- probably -- the first result in this context dealing with growth properties other than the exponential type) is due to 
M.S.Liv\v{s}ic back in 1939, cf.\ \cite{livshits:1939}. It asserts that 
\begin{equation}\label{Livsic}
	\rho((s_n)_{n=0}^\infty)\geq\limsup_{n\to\infty} \frac{2n\ln n}{\ln s_{2n}}
	,
\end{equation}
the right hand side being the order of the entire function $\sum_{n=0}^\infty \frac{z^{2n}}{s_{2n}}$. 
The question whether there exist moment problems for which the order is different from its Liv\v{s}ic estimate appears to have remained 
open since then. In particular, it is mentioned as such in \cite{berg.szwarc:2014}. 
The difficulty can be explained as follows. 
Let $P_n(z)=\sum_{k=0}^n b_{k,n}z^k$, $n\in\bb N_0$, be the orthonormal polynomials of the first kind associated with the 
moment sequence $(s_n)_{n=0}^\infty$. 
Then the order $\rho((s_n)_{n=0}^\infty)$ 
is expressed in terms of the coefficients $b_{k,n}$ as 
\begin{equation}\label{R48}
	\rho((s_n)_{n=0}^\infty) = 
	\limsup_{k\to\infty} \frac{-2k\ln k}{\ln\sum\limits_{n=k}^\infty b_{k,n}^2}
	,
\end{equation}
cf.\ \cite[Theorem 3.1]{berg.szwarc:2014}. 
Liv\v{s}ic' estimate \eqref{Livsic} is obtained when dropping all summands but $b_{n,n}^2$. 
While the term $b_{n,n}$, being the leading coefficient of the orthonormal polynomial $P_n$, is easily expressed via the Jacobi parameters 
of the sequence $(s_n)_{n=0}^\infty$, see \eqref{R49} below, and can in turn be estimated by $s_{2n}$, 
the other terms $b_{k,n}$, $k<n$, are nearly impossible to control. 

In this paper we show that there exist Hamburger moment sequences whose order is strictly larger 
than the right hand side of \eqref{Livsic}. Actually, we shall provide examples of symmetric (meaning that $s_n=0$ for odd $n$) moment sequences for which the gap between the actual order and the Liv\v{s}ic estimate can be arbitrarily close to $1$, see \thref{Livcounter}. 

Another objective of  the present paper is to establish upper and lower bounds for the order of the indeterminate moment problem in terms of the corresponding canonical system. The Hamburger moment problems correspond to canonical systems whose Hamiltonian $H$ has a very particular form \cite{kac:1999}. Namely, $H$ has determinant zero, is 
piecewise constant, and constancy intervals accumulate only to its right endpoint, cf.\ \thref{M28}. 
We employ the recent work \cite{romanov:20XX} about the order of the monodromy matrix of a canonical system 
to establish an upper estimate in \thref{M2}. The lower estimate given in \thref{R2} is easy to see and follows, e.g., from 
\cite{berg.szwarc:2014}. Under a weak regularity assumption these bounds coincide, and hence yield a formula for order, cf.\ \thref{R24}.

Structuring of the paper is as follows. In the remaining part of this introduction we recall the connections between 
moment sequence and Jacobi matrices on the one hand, and canonical systems on the other. It is vital to have this connection 
on hand, since our proofs proceed via the theory of canonical systems. 
Section~2 is in some sense the core of the paper. We establish the upper and lower bounds for order and discuss regularily distributed 
sequences. 
The subject of Section~3 is the construction of examples showing that equality in Liv\v{s}ic' estimate \eqref{Livsic} may fail. 
In fact we show -- slightly stronger -- that the bound obtained from \eqref{R48} by dropping all summands but $b_{n,n}^2$ 
can differ from the order by any pregiven number (only taking into account that the order is always between $0$ and $1$), 
cf.\ \thref{Livcounter}. 

\subsection*{Moment sequences, Jacobi matrices, and canonical systems}

We establish our results on order taking the viewpoint of canonical systems. 
To translate to moment sequences and/or Jacobi parameters, it is necessary to have these connections on hand explicitly. 

The relation between moment sequences and Jacobi matrices is most classical and commonly exploited. A standard reference 
is \cite{akhiezer:1961}. 
Given a moment sequence $(s_n)_{n=0}^\infty$, the orthogonal polynomials $P_n$, $n\in\bb N_0$, satisfy a three-term recurrence 
relation 
\begin{equation}\label{M51}
	zP_n(z)=\rho_nP_{n+1}(z)+q_nP_n(z)+\rho_{n-1}P_{n-1}(z),\quad n=0,1,2,\ldots
	.
\end{equation}
The coefficients $\rho_n$ and $q_n$ in this recurrency are called the \emph{Jacobi parameters} associated with 
the sequence $(s_n)_{n=0}^\infty$. They can be computed from the sequence $(s_n)_{n=0}^\infty$ via determinantal formulae. 
However, these expressions are hardly accessible to practical computation. 
One connection we need frequently is that 
\begin{equation}\label{R49}
	b_{n,n}=\bigg(\prod_{k=1}^{n-1}\rho_k\bigg)^{-1}
	.
\end{equation}
Let us now recall the -- maybe less commonly used -- relation with canoncial systems. The basic idea is that the 
three-term recurrence is nothing but a canonical system with a piecewise constant Hamiltonian. 
This idea can be made precise: Hamburger moment problems correspond to a certain type of canonical systems, and, under 
a suitable normalisation, this correspondence is one-to-one. 
An explicit presentation of these matters can be found in \cite{kac:1999}. 

Let $L\in(0,\infty]$ and $H \colon  [ 0 , L ) \to \bb R^{ 2 \times 2 } $ be a measurable function such that for almost every 
$x\in[0,L)$ the matrix $H(x)$ is positive semidefinite with $\tr H(x)=1$. 
Then the equation 
\[
	\frac{\partial}{\partial x}y(x,z)=-zJH(x)y(x,z),\quad x\in[0,L)
	,
\]
where $J:={\scriptsize\begin{pmatrix} 0 &\hspace*{-5pt} -1\\ 1 &\hspace*{-5pt} 0\end{pmatrix}}$ and $z$ is a complex 
parameter, is called the \emph{canonical system} with \emph{Hamiltonian} $H$. 
If $L<\infty$, we say that \emph{limit circle case} takes place for $H$, whereas for $L=\infty$ one speaks of 
\emph{limit point case}. 

\begin{definition}\thlab{M28}
	Let $\vec l=(l_n)_{n=1}^\infty$ be a sequence of positive numbers and let 
	$\vec\phi=(\phi_n)_{n=1}^\infty$ be a sequence of real numbers with $\phi_{n+1}\not\equiv\phi_n\mod\pi$, $n\in\bb N$. 
	Set 
	\begin{equation}\label{M32}
		x_0:=0,\qquad x_n:=\sum_{k=1}^nl_k,\ n\in\bb N,\qquad x_\infty:=\sum_{k=1}^\infty l_k\in(0,\infty]
		.
	\end{equation}
	Denote 
	\[
		\xi_\phi:=\binom{\cos\phi}{\sin\phi},\quad \phi\in\bb R
		.
	\]
	Then we call the function $H:[0,x_\infty)\to\bb R^{2\!\times\!2}$ defined by 
	\[
		H(x):=\xi_{\phi_n}\xi_{\phi_n}^*,\quad x\in[x_{n-1},x_n),\ n\in\bb N
		,
	\]
	the \emph{Hamburger Hamiltonian} with \emph{lengths} $\vec l$ and \emph{angles} $\vec\phi$. 
	Sometimes we refer to the points $x_n$ as the \emph{nodes} of $H$. 
	\begin{center}
	\setlength{\unitlength}{0.015mm}
	\begingroup\makeatletter\ifx\SetFigFont\undefined%
	\gdef\SetFigFont#1#2#3#4#5{%
	  \reset@font\fontsize{#1}{#2pt}%
	  \fontfamily{#3}\fontseries{#4}\fontshape{#5}%
	  \selectfont}%
	\fi\endgroup%
	{\renewcommand{\dashlinestretch}{30}
	\begin{picture}(6000,1000)(200,100)
	\thicklines
	\path(1320,500)(4470,500)
	\dottedline{120}(4470,500)(5820,500)
	\path(1320,400)(1320,600)
	\path(2220,400)(2220,600)
	\path(3120,400)(3120,600)
	\path(4020,400)(4020,600)
	\path(5820,400)(5820,600)
	\put(500,450){\makebox(0,0)[lb]{\smash{{\SetFigFont{10}{12.0}{\rmdefault}{\mddefault}{\updefault}$H$:}}}}
	\put(1320,40){\makebox(0,0)[b]{\smash{{\SetFigFont{8}{12.0}{\rmdefault}{\mddefault}{\updefault}$x_0$}}}}
	\put(2220,40){\makebox(0,0)[b]{\smash{{\SetFigFont{8}{12.0}{\rmdefault}{\mddefault}{\updefault}$x_1$}}}}
	\put(3120,40){\makebox(0,0)[b]{\smash{{\SetFigFont{8}{12.0}{\rmdefault}{\mddefault}{\updefault}$x_2$}}}}
	\put(4020,40){\makebox(0,0)[b]{\smash{{\SetFigFont{8}{12.0}{\rmdefault}{\mddefault}{\updefault}$x_3$}}}}
	\put(5820,40){\makebox(0,0)[b]{\smash{{\SetFigFont{8}{12.0}{\rmdefault}{\mddefault}{\updefault}$x_\infty$}}}}
	\put(1770,770){\makebox(0,0)[b]{\smash{{\SetFigFont{6}{12.0}{\rmdefault}{\mddefault}{\updefault}$\xi_{\phi_1}\xi_{\phi_1}^*$}}}}
	\put(2670,770){\makebox(0,0)[b]{\smash{{\SetFigFont{6}{12.0}{\rmdefault}{\mddefault}{\updefault}$\xi_{\phi_2}\xi_{\phi_2}^*$}}}}
	\put(3570,770){\makebox(0,0)[b]{\smash{{\SetFigFont{6}{12.0}{\rmdefault}{\mddefault}{\updefault}$\xi_{\phi_3}\xi_{\phi_3}^*$}}}}
	\put(1770,400){\makebox(0,0)[b]{\smash{{\SetFigFont{6}{8.0}{\rmdefault}{\mddefault}{\updefault}$\underbrace{\rule{35pt}{0pt}}_{l_1}$}}}}
	\put(2670,400){\makebox(0,0)[b]{\smash{{\SetFigFont{6}{8.0}{\rmdefault}{\mddefault}{\updefault}$\underbrace{\rule{35pt}{0pt}}_{l_2}$}}}}
	\put(3570,400){\makebox(0,0)[b]{\smash{{\SetFigFont{6}{8.0}{\rmdefault}{\mddefault}{\updefault}$\underbrace{\rule{35pt}{0pt}}_{l_3}$}}}}
	\end{picture}
	}
	\end{center}
\end{definition} 

\noindent
The correspondence between moment sequences, Jacobi parameters, and Hamburger Hamiltonians is given via the formulae 
(here $Q_n$, $n\in\bb N$, denote the orthogonal polynomials of the second kind) 
\begin{align}
	& l_n=P_n(0)^2+Q_n(0)^2,\quad n\in\bb N,
	\label{R54}
	\\
	& \frac 1{\rho_n} =|\sin(\phi_{n+1}-\phi_n)|\sqrt{l_nl_{n+1}}, \quad n\in\bb N,
	\label{R55}
	\\
	& q_n=-\frac 1{l_n}\big[\cot(\phi_{n+1}-\phi_n)+\cot(\phi_n-\phi_{n-1})\big], \quad n\in\bb N.
	\label{R56}
\end{align}
Of course, this relation is again somewhat implicit since the above formulae contain the off-diagonal Jacobi parameters and, 
by their structure, cannot easily be inverted. 
However, the moment sequence $(s_n)_{n=0}^\infty$ is indeterminate if and only if the sequence $\vec l=(l_n)_{n=1}^\infty$ is summable. 
And if $(s_n)_{n=0}^\infty$ is indeterminate, then its Nevanlinna matrix coincides with the monodromy matrix of the canonical system with 
the corresponding Hamburger Hamiltonian, i.e., the matrix $W(L,z)$ where $W(x,z)$ is the unique solution of the inital value problem 
\[
	\left\{
	\begin{array}{l}
		J\frac{\partial}{\partial x}W(x,z)=zH(x)W(x,z),\quad x\in[0,L],
		\\[2mm]
		W(0,z)=I.
	\end{array}
	\right.
\]

\section{Estimates for the order}

We use a pointwise and an averaged measure for the decay of a sequence of positive numbers. 

\begin{definition}\thlab{R46}
	Let $\vec y=(y_n)_{n=1}^\infty$ be a bounded sequence of positive real numbers and let $\alpha\geq 0$. 
	Then we set 
	\begin{align*}
		& \Delta^*(\vec y):=\sup\big\{\tau\geq 0:y_n={\rm O}(n^{-\tau})\big\},
		\\
		& \Delta(\vec y):=\sup\Big\{\tau\geq 0:\frac 1n\sum_{k=n}^{2n-1}y_k={\rm O}(n^{-\tau})\Big\},
		\\
		& \delta(\vec y,\alpha):=\liminf_{n\to\infty}G(n;\vec y,\alpha)
		\ \ \text{where}\ \ 
		G(n;\vec y,\alpha):=\frac{-1}{n\ln n}\ln\Big(y_n^\alpha\prod\limits_{k=1}^{n-1}y_k\Big).
	\end{align*}
\end{definition}

\noindent
The numbers $\Delta^*(\vec y)$ and $\Delta(\vec y)$ are understood as elements of $[0,\infty]$. 
Note here that the sets appearing in their definition are nonempty, since $\vec y$ is bounded. 
The number $\delta(\vec y,\alpha)$ is, a priori, an element of $[-\infty,\infty]$. 

\begin{lemma}\thlab{R47}
	Let $\vec y=(y_n)_{n=1}^\infty$ be a bounded sequence of positive real numbers and let $\alpha\geq 0$. 
	Then 
	\begin{equation}\label{R33}
		\Delta^*(\vec y)\leq\Delta(\vec y)\leq\delta(\vec y,\alpha)
		.
	\end{equation}
\end{lemma}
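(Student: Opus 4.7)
The plan is to prove the two inequalities in \eqref{R33} separately.

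For $\Delta^*(\vec y)\leq\Delta(\vec y)$ I would argue directly: if $y_n\leq Cn^{-\tau}$ for large $n$, then for every $k\in[n,2n-1]$ we have $y_k\leq Ck^{-\tau}\leq Cn^{-\tau}$, and hence $\frac{1}{n}\sum_{k=n}^{2n-1}y_k\leq Cn^{-\tau}$. Thus every $\tau<\Delta^*(\vec y)$ is admissible in the definition of $\Delta(\vec y)$, which yields the first inequality.

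The substantial part is $\Delta(\vec y)\leq\delta(\vec y,\alpha)$. I would fix an arbitrary $\tau'<\Delta(\vec y)$ and prove $\delta(\vec y,\alpha)\geq\tau'$. By hypothesis there is $C$ with $\sum_{k=m}^{2m-1}y_k\leq Cm^{1-\tau'}$ for every $m$; the AM--GM inequality converts this block-wise into
\[
\sum_{k=m}^{2m-1}\ln y_k\;\leq\;m\ln\Bigl(\tfrac{1}{m}\sum_{k=m}^{2m-1}y_k\Bigr)\;\leq\;-\tau'\,m\ln m+m\ln C.
\]
Summing the dyadic blocks $[2^j,2^{j+1}-1]$, $j=0,\dots,L-1$, and using the identity $\sum_{j=0}^{L-1}j\,2^j=(L-2)2^L+2$, I obtain for $n=2^L$ the estimate
\[
\sum_{k=1}^{n-1}\ln y_k\;\leq\;-\tau'\,n\ln n+\mathrm{O}(n).
\]

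For general $n$ with $2^L\leq n<2^{L+1}$ set $r:=n-2^L$ and split $[1,n-1]$ into the full dyadic part $[1,2^L-1]$ (handled by the previous bound) and the tail $[2^L,n-1]$; the tail is controlled by one further application of AM--GM, using the block bound at the scale $m=2^L$. After collecting terms the only cross-contribution is $r\ln(2^L/r)$, which is maximized at $r=2^L/e$ with value $2^L/e=\mathrm{O}(n)$ and is therefore absorbed in the error. Consequently the bound $\sum_{k=1}^{n-1}\ln y_k\leq-\tau'\,n\ln n+\mathrm{O}(n)$ holds for every $n$. The factor $y_n^\alpha$ entering $G$ contributes $\alpha\ln y_n\leq\alpha\ln M$ with $M:=\sup_n y_n$, which is $\mathrm{O}(1)$ and negligible against $n\ln n$. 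Dividing by $-n\ln n$ then gives $G(n;\vec y,\alpha)\geq\tau'-\mathrm{O}(1/\ln n)$, so $\delta(\vec y,\alpha)\geq\tau'$; letting $\tau'\uparrow\Delta(\vec y)$ completes the proof.

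The one genuine obstacle is the passage from dyadic to arbitrary $n$: a naive enclosure $[1,n-1]\subseteq[1,2^{L+1}-1]$ loses a factor of $2$ in the exponent on $n\ln n$, which is fatal. The AM--GM correction on the tail block $[2^L,n-1]$, combined with the explicit maximization of $r\mapsto r\ln(2^L/r)$, is what saves this factor and yields the sharp constant $\tau'$.
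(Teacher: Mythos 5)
Your proof is correct and follows essentially the same route as the paper: dyadic-block decomposition of the product, AM--GM on each full block, and a separate AM--GM on the incomplete tail block, with the key observation that the tail's cross-contribution $r\ln(2^L/r)$ (respectively $(r+1)\log_2\frac{2^{p}c}{r+1}$ in the paper's notation) is bounded by $\mathrm{O}(n)$ and hence vanishes after division by $n\ln n$. The paper handles dyadic and non-dyadic $n$ uniformly via $m-1=2^{p(m)}+r(m)$, while you separate the two cases explicitly, but both treatments are the same calculation; the paper bounds $\log_2$ by its argument whereas you maximize $r\mapsto r\ln(2^L/r)$ directly, which gives the same $\mathrm{O}(n)$ estimate.

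One small remark on your closing comment: the reason the naive enclosure $[1,n-1]\subseteq[1,2^{L+1}-1]$ fails is not so much a loss of a factor $2$ as that the terms $\ln y_k$ are not of fixed sign, so enlarging the range of summation does not yield a one-sided estimate at all (and in the typical regime $y_k\to 0$, the extra terms are negative, pushing the bound the wrong way). The tail-block AM--GM is what produces a genuine upper bound, and your maximization of the cross term then controls it.
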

\begin{proof}
	The inequality $\Delta^*(\vec y)\leq\Delta(\vec y)$ is clear. 
	To show that $\Delta(\vec y)\leq\delta(\vec y,\alpha)$, 
	consider $\tau\geq 0$ and $c\geq 1$ with $\frac 1n\sum_{k=n}^{2n-1}y_k\leq cn^{-\tau}$, $n\in\bb N$. 

	For $m\in\bb N$, $m\geq 2$, let $p(m)  = \lfloor\log_2(m-1)\rfloor$ and $r(m)\in\{0,\ldots,2^{p(m)}-1\}$ be defined by $m-1=2^{p(m)}+r(m)$. The arithmetic-geometric mean inequality gives
	\begin{multline*}
		\prod_{k=1}^{m-1}y_k= \prod_{j=1}^{p(m)}\prod_{k=2^{j-1}}^{2^j-1}y_k
		\cdot\prod_{k=2^{p(m)}}^{m-1}y_k
		\\
		\leq \prod_{j=1}^{p(m)}
		\bigg(\underbrace{\frac 1{2^{j-1}}\sum_{k=2^{j-1}}^{2^j-1}y_k}_{\leq c(2^{j-1})^{-\tau}}\bigg)^{2^{j-1}}
		\cdot
		\bigg(\underbrace{\frac 1{r(m)+1}\sum_{k=2^{p(m)}}^{m-1}y_k}_{\leq\frac{2^{p(m)}}{r(m)+1} c(2^{p(m)})^{-\tau}}\bigg)^{r(m)+1}
		,
	\end{multline*}
	and we obtain
	\begin{align*}
		\frac{-1}{m\log_2 m} & \log_2\Big(y_m^\alpha\prod_{k=1}^{m-1}y_k\Big)\geq
		\frac 1{m\log_2 m}\Big[-\alpha\underbrace{\log_2 y_m}_{\leq\log_2 \left\| \vec y \right\|_\infty  }-
		\log_2 c \sum_{j=1}^{p(m)}2^{j-1}
		\\
		& +\tau \hspace*{-3mm}\underbrace{\sum_{j=1}^{p(m)}(j-1)2^{j-1}}_{=p(m)2^{p(m)} + O ( 2^{ p ( m )} ) }\hspace*{-3mm}
		-(r(m)\!+\!1)\log_2\frac{2^{p(m)}c}{r(m)\!+\!1}+(r(m)\!+\!1)\tau p(m) \Big]
		\\
		\geq &\ \tau\frac{p(m)}{\log_2 m}-
		\frac{r(m)\!+\!1}m\frac 1{\log_2 m}\log_2\frac{2^{p(m)}c}{r(m)\!+\!1}+\,{\rm o}(1)
		.
	\end{align*}
	Estimating the logarithm by its argument, we find that the second summand in the right hand side is ${\rm o}(1)$. Since  
	$\lim_{m\to\infty}\frac{p(m)}{\log_2 m}=1$ we infer that $\delta(\vec y,\alpha)\geq\tau$. 
\end{proof}

\subsection{An upper bound for $\rho(H)$}

For a Hamiltonian $H$ in the limit circle case we denote by $\rho(H)$ the order of the entries of its monodromy matrix. 
The following fact is, up to using the connection \eqref{R54}--\eqref{R56}, nothing but \cite[Theorem~1.2]{berg.szwarc:2014}. 

\begin{proposition}\thlab{R27}
	Let $H$ be a limit circle Hamburger Hamiltonian with lengths $\vec l$ and angles $\vec\phi$. 
	Then the order of $H$ does not exceed the convergence exponent of $\vec l$, i.e., 
	\[ 
		\rho(H)\leq\inf\big\{p>0:\vec l\in\ell^p\big\} 
		.
	\]
\end{proposition}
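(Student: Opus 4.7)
The plan is twofold. The fastest route is to observe that the statement is a direct translation of \cite[Theorem~1.2]{berg.szwarc:2014}: the correspondence recalled in the introduction identifies the Nevanlinna matrix of the indeterminate moment problem with the monodromy matrix of the canonical system, so $\rho(H)=\rho((s_n)_{n=0}^\infty)$; the translation formula \eqref{R54} says that the length sequence $\vec l$ is exactly $(P_n(0)^2+Q_n(0)^2)_{n=1}^\infty$; and Berg and Szwarc bound the order of the moment problem above by the convergence exponent of this sequence.

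A self-contained canonical-systems argument is, however, almost as short, and more in the spirit of the present paper. The key observation is that on each constancy interval $[x_{n-1},x_n)$ the matrix $J\xi_{\phi_n}\xi_{\phi_n}^*$ is nilpotent, since $\xi_\phi^*J\xi_\phi=0$. Exponentiating the canonical system on this interval yields the affine transfer matrix
\[
	T_n(z)\,=\,I-l_n z\, J\xi_{\phi_n}\xi_{\phi_n}^*,\qquad \|T_n(z)\|\le 1+l_n|z|,
\]
and the monodromy matrix $W(L,z)$ is obtained as the limit, as $N\to\infty$, of the ordered products $T_N(z)\cdots T_1(z)$.

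Composing the norm estimates and passing to the limit (which is legitimate because $\vec l\in\ell^1$ in the limit circle case) yields $\|W(L,z)\|\le F(|z|)$, where
\[
	F(z)\,:=\,\prod_{n=1}^\infty(1+l_n z).
\]
The function $F$ is a canonical product of genus zero with zeros $\{-1/l_n\}_{n=1}^\infty$, so by Hadamard's theorem its order equals the convergence exponent of this zero sequence, which is precisely $\inf\{p>0:\vec l\in\ell^p\}$. Dominating the entries of $W(L,\cdot)$ by the real-valued, non-decreasing function $r\mapsto F(r)$ therefore yields the claimed estimate on $\rho(H)$.

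The only technical point is the justification of the infinite matrix product limit, which is routine once one knows that the scalar product $\prod_n(1+l_n|z|)$ converges locally uniformly in $z$; no substantive obstacle is present.
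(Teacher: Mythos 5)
Your first route is exactly what the paper does: it reduces \thref{R27} to \cite[Theorem~1.2]{berg.szwarc:2014} via the identification $l_n=P_n(0)^2+Q_n(0)^2$ from \eqref{R54}, and the paper offers nothing beyond that citation.

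Your self-contained canonical-systems argument is correct and is a genuine alternative proof. The nilpotency of $J\xi_\phi\xi_\phi^*$ (because $\xi_\phi^*J\xi_\phi=0$) gives the exact transfer matrix $T_n(z)=I-l_nzJ\xi_{\phi_n}\xi_{\phi_n}^*$ with $\|T_n(z)\|\le 1+l_n|z|$; multiplying the ordered products and passing to the limit (legitimate in the limit circle case since $\sum l_n<\infty$ and $W(x_N,z)\to W(x_\infty,z)$ by continuity of the solution at the right endpoint) bounds all entries of $W(L,\cdot)$ by the genus-zero canonical product $F(r)=\prod(1+l_nr)$, whose order is the convergence exponent of $\vec l$ by Borel/Hadamard. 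The two approaches differ only in packaging: Berg--Szwarc's Theorem 1.2 is the same estimate formulated on the orthogonal-polynomial side, while your version avoids any detour through the moment problem and fits the framework the paper otherwise works in (indeed it is the natural ``trivial'' case of \thref{M29} with no approximation). The one thing worth spelling out is the last step: since $F$ has nonnegative Taylor coefficients one has $F(r)=\max_{|z|=r}|F(z)|$, so $|g(z)|\le F(|z|)$ for an entry $g$ of $W(L,\cdot)$ does indeed force $\rho(g)\le\rho(F)$.
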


\noindent
In our first main result, \thref{M2} below, we give an upper bound for $\rho(H)$ which takes the asymptotic behaviour of 
the sequence of angles into consideration. 

To quantify the behaviour of length- and angle sequences of a Hamburger Hamiltonian, we use the power scale and a pointwise measure for the decay of lengths, 
an averaged measure for the decay of angle-differences, and a measure for the speed of possible convergence of angles
weighted with lengths, i.e., taking into account peaks of lengths. 

\begin{definition}\thlab{M1}
	Let $H$ be a Hamburger Hamiltonian with lengths $\vec l$ and angles $\vec\phi$. 
	Set
	\begin{align*}
		\Delta_l(H) &\, :=\Delta^*(\vec l),\qquad
		\Delta_l^+(H):=\max\big\{1,\Delta_l(H)\big\}
		,
		\\
		\Delta_\phi(H) &\, :=\Delta\big((|\sin(\phi_{n+1}-\phi_n)|)_{n=1}^\infty\big)
		.
	\end{align*} 
	Provided that $\Delta_l^+(H)<\infty$, set 
	\begin{align*}
		\Lambda(H) &\, :=\sup_{\phi\in[0,\pi)}\sup\Big\{\tau\geq 0:\sum_{j=n}^\infty l_j |\sin(\phi_j-\phi )|=
		{\rm O}\big(n^{1-\Delta_l^+-\tau}\big)\Big\}\in[0,\infty]
		.
	\end{align*} 
	When no confusion is possible, we drop explicit notation of $H$. 
\end{definition}

\noindent
%The significance of the number $\Delta_l^+$ is that, since $\vec l$ is summable, it can be used to estimated tales:
%\[ \sum_{j=n}^\infty l_j={\rm O}\big(n^{1-\Delta_l^++\epsilon}\big),\quad\epsilon>0	.\]

\begin{remark}\thlab{M88}
	A pointwise estimate for the speed of convergence of angles has an implication on $\Lambda(H)$. Denoting  
	\begin{equation}\label{M83}
		\Lambda^*(H):=
		\begin{cases}
			\Delta^*\big((|\sin(\phi_n-\phi)|)_{n=1}^\infty\big) &\hspace*{-3mm},\quad 
			\phi\mod\pi=\lim_{n\to\infty}\phi_n\mod\pi\text{ exists},
			\\
			0 &\hspace*{-3mm},\quad \vec\phi\text{ not convergent modulo $\pi$},
		\end{cases}
	\end{equation}
	it holds that $\Lambda^*(H)\geq\Lambda(H)$. 
\end{remark}

\begin{lemma}\thlab{M80}
	The quantities $\Delta_\phi$ and $\Lambda$ are related by 
	\begin{equation}\label{R8}
		\Delta_\phi-1\leq\Lambda
		.
	\end{equation}
	In particular, if $\Delta_\phi<\infty$, then $\Delta_l^+-\Delta_\phi+\Lambda>0$ unless $\Delta_l^+=1$ and $\Delta_\phi=\Lambda+1$. 
\end{lemma}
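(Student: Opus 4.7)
The plan is to derive the inequality $\Delta_\phi - 1 \leq \Lambda$ directly from the definitions. The case $\Delta_\phi \leq 1$ is immediate since $\Lambda \geq 0$, so I would focus on $\Delta_\phi > 1$. The ``unless''-part will then follow by simple algebra: if $\Delta_l^+ - \Delta_\phi + \Lambda \leq 0$, combining with $\Delta_\phi - 1 \leq \Lambda$ forces $\Delta_l^+ \leq 1$, and since $\Delta_l^+ \geq 1$ by definition this gives equalities throughout, so $\Delta_l^+ = 1$ and $\Delta_\phi = \Lambda + 1$.

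Assume $\Delta_\phi > 1$ and fix any $\tau' \in (1, \Delta_\phi)$. The definition of $\Delta$ supplies $\sum_{k=n}^{2n-1}|\sin(\phi_{k+1}-\phi_k)| = {\rm O}(n^{1-\tau'})$, and a dyadic summation over the blocks $[2^j n, 2^{j+1} n)$ (using the convergent geometric series with ratio $2^{1-\tau'} < 1$) upgrades this to the tail bound $\sum_{k \geq n}|\sin(\phi_{k+1}-\phi_k)| = {\rm O}(n^{1-\tau'})$. The key step now is to lift the angles modulo $\pi$ to an honestly convergent real sequence. I would choose $\psi_k \in (-\pi/2, \pi/2]$ with $\psi_k \equiv \phi_{k+1} - \phi_k \mod \pi$; since $|x| \leq \frac{\pi}{2}|\sin x|$ on that interval, $\sum_k |\psi_k| < \infty$. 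Setting $\tilde\phi_1 := \phi_1$ and $\tilde\phi_{k+1} := \tilde\phi_k + \psi_k$, the sequence $\tilde\phi_k \equiv \phi_k \mod \pi$ then converges to some $\tilde\phi_\infty$, and
\[
  |\sin(\phi_j - \tilde\phi_\infty)| \,\leq\, |\tilde\phi_j - \tilde\phi_\infty| \,\leq\, \sum_{k \geq j}|\psi_k| \,=\, {\rm O}(j^{1-\tau'}).
\]

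With $\phi := \tilde\phi_\infty \mod \pi \in [0, \pi)$ in hand, I would combine this pointwise bound with the elementary tail estimate $\sum_{j \geq n} l_j = {\rm O}(n^{1-s})$ --- valid for every $s < \Delta_l^+$, either via $l_j = {\rm O}(j^{-s})$ when $\Delta_l^+ > 1$, or simply via summability of $\vec l$ together with $1 - s > 0$ when $\Delta_l^+ = 1$. This yields
\[
  \sum_{j=n}^\infty l_j |\sin(\phi_j - \phi)| \,=\, {\rm O}(n^{2-\tau'-s}) \,=\, {\rm O}\bigl(n^{1 - \Delta_l^+ - (\tau' + s - 1 - \Delta_l^+)}\bigr),
\]
and letting $\tau' \uparrow \Delta_\phi$ and $s \uparrow \Delta_l^+$ shows that every $\tau < \Delta_\phi - 1$ is admissible in the supremum defining $\Lambda$, giving $\Lambda \geq \Delta_\phi - 1$. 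The main obstacle is the lift modulo $\pi$: the angles of a Hamburger Hamiltonian are only determined up to $\pi$ (since $\xi_\phi\xi_\phi^* = \xi_{\phi+\pi}\xi_{\phi+\pi}^*$), so one must select the small-jump branch $\psi_k$ in order for the cumulative sums to converge and produce a genuine limiting direction $\phi$; without this choice, the termwise bound on $|\sin(\phi_j-\phi)|$ would not be available.
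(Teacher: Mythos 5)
Your proof is correct and takes essentially the same route as the paper: a dyadic summation upgrades the blockwise Ces\`aro bound to a tail bound, the angles are lifted to a convergent real sequence by selecting the small-jump branch modulo $\pi$ (the paper phrases this as ``adding a proper multiple of $\pi$ to each $\phi_n$''), and the resulting pointwise decay is combined with a tail estimate for $\vec l$. The only cosmetic difference is that the paper separates the cases $\Delta_l<1$ and $\Delta_l\geq 1$ in the final estimate, while you treat them uniformly via the tail bound $\sum_{j\geq n}l_j={\rm O}(n^{1-s})$ for $s<\Delta_l^+$.
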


\noindent
In this, and subsequent proofs, we use the following notation:
Let $X,Y$ be functions taking nonnegative numbers as values. Then 
\begin{align*}
	& X\lesssim Y
	\quad\Longleftrightarrow\quad
	\exists c>0;\ X\leq cY
	\\
	& X\asymp Y
	\quad\Longleftrightarrow\quad
	X\lesssim Y\ \text{and}\ Y\lesssim X
\end{align*}

\begin{proof}
	The inequality is trivial if $\Delta_\phi \leq 1$. Hence, assume that $\Delta_\phi>1$. 
	For arbitrary $\tau \in (1, \Delta_\phi)$ we have
	\[
		\sum_{j=n}^{\infty} |\sin( \phi_{j+1}-\phi_j ) |  \leq
		\sum_{l=0}^{\infty} \sum_{j=2^l n}^{2^{l+1}n} |\sin( \phi_{j+1}-\phi_j ) |
		\lesssim\sum_{l=0}^{\infty} (2^l n)^{1-\tau} = 
		{\rm O}\big(n^{1-\tau}\big)
		.
	\]
	By adding a proper multiple of $\pi$ to each $\phi_n$, we can assume without loss of generality that 
	$|\phi_{n+1}-\phi_n|\leq \frac{\pi}{2}$. Then 
	\[ 
		\sum_{j=n}^\infty|\phi_{j+1}-\phi_j| \lesssim
		\sum_{j=n}^\infty | \sin(\phi_{j+1}-\phi_j)|
		= {\rm O}\big(n^{1-\tau}\big)
		,
	\]
	hence $\vec\phi$ has the limit $\phi:=\phi_1+\sum_{n=1}^\infty(\phi_{n+1}-\phi_n)$. 
	If $\Delta_l<1$, and hence $\Delta_l^+=1$, we get
	\[
		\sum_{j=n}^\infty l_j |\sin(\phi_j-\phi )| \lesssim
		n^{1-\tau} \sum_{j=n}^\infty l_j = {\rm O}\big(n^{-(\tau-1)}\big)
		.
	\]
	If $\Delta_l\geq 1$, and hence $\Delta_l^+=\Delta_l$, we have for arbitrary $\epsilon>0$
	\[
		\sum_{j=n}^\infty l_j |\sin(\phi_j-\phi )| \lesssim
		\sum_{j=n}^\infty j^{-\Delta_l^+ + \epsilon}\cdot j^{1-\tau} = {\rm O}\big(n^{1 - \Delta_l^+ -(\tau-1) + \epsilon}\big)
		.
	\]
	In both cases, this shows that $\tau-1\leq \Lambda$.
\end{proof}

\noindent
Our first theorem can now be formulated. 

\begin{theorem}\thlab{M2}
	Let $H$ be a limit circle Hamburger Hamiltonian with lengths $\vec l$ and angles $\vec\phi$. 
	Assume that $(\Delta_l^+,\Delta_\phi,\Lambda)\neq(1,1,0)$. 
	\begin{enumerate}[$(i)$]
	\item Generic region: If $\Delta^+_l+\Delta_\phi\geq 2$, then 
	\[
		\rho(H)\leq \frac 1{\Delta^+_l+\Delta_\phi}
		.
	\]
	\item Critical triangle: If $\Delta^+_l+\Delta_\phi<2$, then 
		\[
			\rho(H)\leq\max\Big\{\frac 1{\Delta^+_l+\Delta_\phi},
			\frac{1-\Delta_\phi+\frac 12\Lambda}{\Delta^+_l-\Delta_\phi+\Lambda}\Big\}
			.
		\]
	\end{enumerate}
\end{theorem}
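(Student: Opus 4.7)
The plan is to deduce the theorem from the general upper bound for the order of the monodromy matrix of a canonical system proved in \cite{romanov:20XX}, specialised to the Hamburger case. That bound is phrased in terms of integral quantities of $H$ over dyadic intervals of the $x$-axis, together with projections of $H$ onto test directions $\xi_\phi$. For a Hamburger Hamiltonian, where $H$ is piecewise constant of rank one, these integrals collapse into finite sums over dyadic index blocks $I_k:=\{n\in\bb N:2^{k-1}\leq n<2^k\}$; the two basic quantities that appear are the block length $\sum_{n\in I_k}l_n$ and, for a test angle $\phi$, the directional weights $\sum_{n\in I_k}l_n|\sin(\phi_n-\phi)|$, complemented by the inter-block rotation $\sum_{n\in I_k}|\sin(\phi_{n+1}-\phi_n)|$.

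For case $(i)$ I would translate the hypotheses of the theorem into the language of \cite{romanov:20XX} without invoking a preferred direction. Using $\Delta^*(\vec l)=\Delta_l$, together with \thref{R47} applied to $\vec l$ in the subcritical regime $\Delta_l<1$ (where $\Delta_l^+=1$ and $\vec l\in\ell^1$ anyway by \thref{R27}), one obtains $\sum_{n\in I_k}l_n={\rm O}(2^{k(1-\Delta_l^+)})$ up to arbitrarily small $\epsilon$. The definition of $\Delta_\phi$ and \thref{R47} give $\sum_{n\in I_k}|\sin(\phi_{n+1}-\phi_n)|={\rm O}(2^{k(1-\Delta_\phi)})$. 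Substituting the two estimates into the abstract bound of \cite{romanov:20XX} and summing the geometric series $\sum_k|z|^{\rho_0}(2^{k(1-\Delta_l^+)})^{\rho_0}(2^{k(1-\Delta_\phi)})^{\rho_0}$ in the standard way, one reads off that it converges precisely when $\rho_0\geq 1/(\Delta_l^++\Delta_\phi)$, giving the generic bound.

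For the critical triangle in case $(ii)$ the generic estimate is wasteful because the dominant growth is no longer driven by inter-block rotation but by the alignment of $H$ with a single asymptotic direction. Here I would instead select a pivot angle $\phi$ nearly realising the supremum in the definition of $\Lambda$ in \thref{M1}, and exploit $\sum_{j\geq n}l_j|\sin(\phi_j-\phi)|={\rm O}(n^{1-\Delta_l^+-\Lambda+\epsilon})$. With this choice of $\phi$, in the bound of \cite{romanov:20XX} the transverse projection of $H$ is estimated via $\Lambda$ instead of $\Delta_\phi$, which is the sharper input exactly in the critical triangle. The balance between the corresponding contributions to the estimate has to be found by optimising both the pivot angle and the dyadic scaling parameter, and this balancing is what produces the quotient $(1-\Delta_\phi+\tfrac{1}{2}\Lambda)/(\Delta_l^+-\Delta_\phi+\Lambda)$; taking the better of the two available bounds yields the stated maximum.

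The main obstacle I expect is precisely this bookkeeping in case $(ii)$: simultaneously tracking the three parameters $\Delta_l^+,\Delta_\phi,\Lambda$ through the convergence condition inherited from \cite{romanov:20XX}, and verifying by a careful scale-optimisation that no intermediate choice of pivot direction can improve upon the maximum in $(ii)$. \thref{M80} will be used at this point to ensure that $\Delta_l^+-\Delta_\phi+\Lambda>0$, so that the second expression in the maximum is a well-defined positive exponent, and the assumption $(\Delta_l^+,\Delta_\phi,\Lambda)\neq(1,1,0)$ rules out the single degenerate configuration in which both the generic and the critical estimate collapse.
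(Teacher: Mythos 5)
Your proposal correctly identifies \cite{romanov:20XX} as the key tool, and correctly anticipates both the role of a pivot angle realising $\Lambda$ and the use of \thref{M80} to ensure $\Delta_l^+-\Delta_\phi+\Lambda>0$. However, it mischaracterises what the result of \cite{romanov:20XX} (\thref{M29} in the paper) actually is, and this leads to a genuine gap. \thref{M29} is not a dyadic-block summation inequality into which one plugs estimates and ``sums a geometric series''; it is an \emph{approximation} theorem: for each $R>1$ one must exhibit a finite-rank Hamiltonian $H^\star(R)$ on $[0,L)$ \emph{and a family of weights} $a_n(R)\in(0,1]$, and then verify four conditions (items $(i)$--$(iv)$ of \thref{M29}) that involve both the data and the weights. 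The entire technical content of the paper's proof is the design of these weights --- they are taken piecewise of the form $a_n(R)^2=\tfrac1R n^{\Delta_l}$ for $n\leq R^\sigma$, $\tfrac1{\sqrt R}\,n^{(\Delta_l^+-\Delta_\phi')/2}$ for $R^\sigma<n\leq N(R)$, and $n^{-\Lambda(\phi)'/2}$ on the tail, with $\sigma=(\Delta_l^++\Delta_\phi')^{-1}$ and $N(R)=\lfloor R^{(1-d(\phi))/(\Delta_l^+-1+\Lambda(\phi)'/2)}\rfloor$ --- and \thref{M89} explicitly flags this choice as the essential optimisation step. Your proposal contains no analogue of this construction, so there is no mechanism by which the conditions of \thref{M29} could be checked.

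A second, smaller issue: you propose handling case $(i)$ ``without invoking a preferred direction''. In the framework of \thref{M29} this is not possible, because the approximating Hamiltonian must be defined on all of $[0,L)$ and hence must carry \emph{some} direction $\xi_\phi$ on the tail beyond the cut-off node $x_{N(R)}$; that tail feeds directly into condition $(i)$ of \thref{M29} through the quantity $\sum_{j>N(R)}l_j|\sin(\phi_j-\phi)|$, i.e.\ through $\Lambda(\phi)$. The paper in fact treats both regimes by the \emph{same} approximation scheme; the dichotomy between generic region and critical triangle appears only in the end, when one compares the exponents produced by the common construction.
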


\noindent
The proof of this theorem will be carried out in \S2.2. Before that we discuss several aspects. 

\begin{remark}[Sharpness]\thlab{M4}
	We will see that for a large class of Hamburger Hamiltonians in the generic region 
	the stated upper bound is equal to their order, cf.\ \thref{R24}. 
	Contrasting this, in the critical triangle (and $\Delta_\phi>0$), we do not know whether the given bound is sharp. In fact, we have no example which lies inside the critical triangle 
	where $\rho(H)$ can be computed. In particular, it is unknown whether the speed of possible convergence of angles 
	measured by $\Lambda$ influences the order. We believe the answer is affirmative. 

	In the case excluded in the assumption, namely if $(\Delta_l^+,\Delta_\phi,\Lambda)=(1,1,0)$, 
	we have only the trivial bound ``$\rho(H)\leq 1$'', 
	and again do not know if this bound is attained by some Hamburger Hamiltonian. 
\end{remark}

\noindent
Note that, in some vague sense, the division into generic region and critical triangle corresponds to the cases of ``order $\leq\frac 12$'' or 
``order $\in(\frac 12,1]$'', respectively. That may be an explanation for the occurrence of this case distinction. As experience tells, 
it would not be a surprise to witness a fundamentally different behaviour in these cases. 

Next let us have a closer look at the two expressions whose maximum establishes the upper bound in the critical triangle. 
Set 
\begin{align*}
	& D:=\big\{(x,y,z)\in\bb R^3: x\geq 1,\,y\geq 0,\,z\geq 0,\,y\leq z+1,\,x-y+z>0\big\},
	\\
	& g(x,y,z):=\frac{1-y+\frac 12z}{x-y+z},\quad 
	(x,y,z)\in D
	.
\end{align*}
Then 
\begin{align}
	& \frac 1{x+y}\leq g(x,y,z)\ \Leftrightarrow\ 0\leq(2-x-y)(y-\frac 12z)
	\label{M84}
	\\
	& g(x,y,z)=\frac 12\ \text{if}\ x+y=2,\qquad g(x,y,z)=1\Leftrightarrow(x,z)=(1,0),
	\label{M87}
\end{align}
the function $g ( x,y, \cdot ) $ is decreasing if $ x+y < 2 $, increasing if $ x+y > 2 $, and the function 
$ g ( x , \cdot , z ) $ is monotone nonincreasing.  
	
The relation \eqref{M84} shows that the given bound in the critical triangle equals $\frac 1{\Delta_l^++\Delta_\phi}$ 
if and only if $\Delta_\phi\leq\frac 12\Lambda$. 

As we have already observed in \thref{R47} and \thref{M88}, pointwise estimates lead to estimates for $\Delta_\phi$ and $\Lambda$. 
From this we obtain the following corollary where the -- easier to handle -- quantities 
\[
	\Delta_\phi^*:=\Delta\big((|\sin(\phi_{n+1}-\phi_n)|)_{n=1}^\infty\big)
	,
\]
and $\Lambda^*$ appear instead of $\Delta_\phi$ and $\Lambda$. Of course, this statement is weaker than \thref{M2}. 

\begin{corollary}\thlab{M81}
	Let $H$ be a Hamburger Hamiltonian with lengths $\vec l$ and angles $\vec\phi$. 
	Assume that $(\Delta_l^+,\Delta_\phi^*,\Lambda^*)\neq(1,1,0)$. 
	Then 
	\[
		\rho(H)\leq 
		\begin{cases}
			\frac 1{\Delta^+_l+\Delta_\phi^*} &\hspace*{-3mm},\quad \Delta^+_l+\Delta_\phi^*\geq 2,
			\\
			\frac{1-\Delta_\phi^*+\frac 12\Lambda^*}{\Delta^+_l-\Delta_\phi^*+\Lambda^*} &\hspace*{-3mm},\quad 
			\Delta^+_l+\Delta_\phi^*<2.
		\end{cases}
	\]
\end{corollary}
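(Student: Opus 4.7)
The plan is to derive Corollary~\thref{M81} from Theorem~\thref{M2} by substituting the starred quantities and invoking the monotonicity properties of the bounding function $g(x,y,z)=\frac{1-y+z/2}{x-y+z}$ catalogued after Theorem~\thref{M2}. The required inputs are $\Delta_\phi^*\leq\Delta_\phi$ (Lemma~\thref{R47} applied to the bounded sequence $(|\sin(\phi_{n+1}-\phi_n)|)_{n=1}^\infty$) and $\Lambda^*\leq\Lambda$ (Remark~\thref{M88}).

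If $\Delta_l^+ + \Delta_\phi^*\geq 2$, then $\Delta_\phi^*\leq\Delta_\phi$ places us in the generic region of Theorem~\thref{M2}, giving $\rho(H)\leq\tfrac{1}{\Delta_l^+ + \Delta_\phi}\leq\tfrac{1}{\Delta_l^+ + \Delta_\phi^*}$. For the critical triangle $\Delta_l^+ + \Delta_\phi^* < 2$ I would split according to whether $\Delta_l^+ + \Delta_\phi < 2$ as well. In the subcase $\Delta_l^+ + \Delta_\phi \geq 2$, Theorem~\thref{M2}(i) yields $\rho(H)\leq\tfrac{1}{2}$, and a direct computation shows $g(x,y,z)\geq\tfrac{1}{2}\Leftrightarrow x+y\leq 2$, so $g(\Delta_l^+,\Delta_\phi^*,\Lambda^*)\geq\tfrac{1}{2}$. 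In the subcase $\Delta_l^+ + \Delta_\phi < 2$, Theorem~\thref{M2}(ii) supplies the maximum of $\tfrac{1}{\Delta_l^+ + \Delta_\phi}$ and $g(\Delta_l^+,\Delta_\phi,\Lambda)$; the latter is dominated by $g(\Delta_l^+,\Delta_\phi^*,\Lambda^*)$ because $g(x,\cdot,z)$ is nonincreasing and, since $x+y<2$, $g(x,y,\cdot)$ is decreasing, while the former is at most $\tfrac{1}{\Delta_l^+ + \Delta_\phi^*}$ and then bounded by $g(\Delta_l^+,\Delta_\phi^*,\Lambda^*)$ via the equivalence~\eqref{M84}.

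The main obstacle is this last step through~\eqref{M84}, which in the critical triangle reduces to the inequality $\Delta_\phi^* \geq \tfrac{1}{2}\Lambda^*$; I would in fact prove the stronger bound $\Delta_\phi^*\geq\Lambda^*$ directly. Whenever $\phi\equiv\lim_n\phi_n\mod\pi$ exists and $|\sin(\phi_n-\phi)|=\mathrm{O}(n^{-\tau})$ for $\tau<\Lambda^*$, the triangle-type estimate $|\sin(\phi_{n+1}-\phi_n)|\leq|\sin(\phi_{n+1}-\phi)|+|\sin(\phi_n-\phi)|$ gives $|\sin(\phi_{n+1}-\phi_n)|=\mathrm{O}(n^{-\tau})$, hence $\Delta_\phi^*\geq\tau$; passing to the supremum yields $\Delta_\phi^*\geq\Lambda^*$, and the case $\Lambda^*=0$ is trivial. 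Finally, if $(\Delta_l^+,\Delta_\phi,\Lambda)=(1,1,0)$ so that Theorem~\thref{M2} does not apply, the corollary's hypothesis forces $\Delta_\phi^*<1$, whence $\Delta_l^++\Delta_\phi^*<2$ and $g(1,\Delta_\phi^*,0)=1$, matching the trivial bound $\rho(H)\leq 1$ from Proposition~\thref{R27}.
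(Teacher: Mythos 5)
Your proof is correct and follows essentially the same route as the paper: derive the corollary from \thref{M2} using $\Delta_\phi^*\leq\Delta_\phi$, $\Lambda^*\leq\Lambda$, and $\Lambda^*\leq\Delta_\phi^*$ together with the monotonicity and sign properties of the function $g$, splitting into the same three cases according to whether $\Delta_l^++\Delta_\phi^*$ and $\Delta_l^++\Delta_\phi$ are at least $2$. The only (welcome) additions are that you actually prove the inequality $\Lambda^*\leq\Delta_\phi^*$ via the estimate $|\sin(\phi_{n+1}-\phi_n)|\leq|\sin(\phi_{n+1}-\phi)|+|\sin(\phi_n-\phi)|$, whereas the paper simply calls it obvious, and that you gather the degenerate tuple $(\Delta_l^+,\Delta_\phi,\Lambda)=(1,1,0)$ into a final remark instead of ruling it out case by case; note also that the text of \thref{M88} literally reads $\Lambda^*\geq\Lambda$, which appears to be a misprint, and your use of $\Lambda^*\leq\Lambda$ agrees with both what is needed and with the paper's own use of that inequality inside its proof.
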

\begin{proof}[Deduction of \thref{M81} from \thref{M2}]
	We distinguish three cases.
	\begin{list}{}{\leftmargin=0pt}
	\item --- \textit{Case $\Delta_l^++\Delta_\phi^*\geq 2$:} 
		We have $\Delta_l^++\Delta_\phi\geq\Delta_l^++\Delta_\phi^*\geq 2$. 
		Towards a contradiction assume that $(\Delta_l^+,\Delta_\phi,\Lambda)=(1,1,0)$. Then $\Delta_\phi^*\leq 1$ 
		and $\Delta_l^++\Delta_\phi^*\geq 2$ implies $\Delta_\phi^*=1$. Moreover, $\Lambda^*\leq\Lambda$, and 
		hence $\Lambda^*=0$. This case, however, is excluded by assumption. Thus \thref{M2} applies and yields 
		\[
			\rho(H)\leq\frac 1{\Delta_l^++\Delta_\phi}\leq\frac 1{\Delta_l^++\Delta_\phi^*}
			.
		\]
	\item --- \textit{Case $\Delta_l^++\Delta_\phi^*<2,\Delta_l^++\Delta_\phi\geq 2$:} 
		If $(\Delta_l^+,\Lambda)=(1,0)$, then also $\Lambda^*=0$, and hence $g(\Delta_l^+,\Delta_\phi^*,\Lambda^*)=1$, 
		cf.\ \eqref{M87}. 
		Assume that $(\Delta_l^+,\Lambda)\neq(1,0)$. Then \thref{M2} and the obvious fact that 
		$\Lambda^*\leq\Delta_\phi^*$ yields 
		\[
			\rho(H)\leq \frac 1{\Delta_l^++\Delta_\phi}\leq \frac 1{\Delta_l^++\Delta_\phi^*}
			\leq g(\Delta_l^+,\Delta_\phi^*,\Lambda^*)
			.
		\]
	\item --- \textit{Case $\Delta_l^++\Delta_\phi^*<2,\Delta_l^++\Delta_\phi<2$:} 
		We have 
		\begin{multline*}
			\rho(H)\leq\max\big\{\frac 1{\Delta_l^++\Delta_\phi},g(\Delta_l^+,\Delta_\phi,\Lambda)\big\}
			\\
			\leq\max\big\{\frac 1{\Delta_l^++\Delta_\phi^*},g(\Delta_l^+,\Delta_\phi^*,\Lambda^*)\big\}=
			g(\Delta_l^+,\Delta_\phi^*,\Lambda^*)
			.
		\end{multline*}
	\end{list}
\end{proof}

\noindent
Comparing the nature of the quantities $\Delta_\phi,\Lambda$ and $\Delta_\phi^*,\Lambda^*$, 
and having in mind \thref{R27}, suggests that there might be room for 
improvement by introducing an averaged measure for the decay of lengths. 
However, as examples show, there seems to be an intrinsic obstacle. 

\subsection{Proof of \thref{M2}}

To start with we settle two simple cases. 
\begin{list}{}{\leftmargin=0pt}
\item --- If $(\Delta_l^+,\Lambda)=(1,0)$ and hence $\Delta_\phi\leq 1$, or if $(\Delta_l^+,\Delta_\phi)=(1,0)$, 
	the assertion reduces to the trivial bound ``$\rho(H)\leq 1$''. 
\item --- The convergence exponent of $(l_n)_{n=0}^\infty$ is not larger than $\frac 1{\Delta_l^+}$. 
	Therefore \thref{R27} entails 
	\begin{equation}\label{R28}
		\rho(H)\leq \frac{1}{\Delta_l^+}
		.
	\end{equation}
	In particular, the assertion of the theorem holds if $\Delta_l^+=\infty$. 
\end{list}
These observations justify that throughout the following we may assume 
\[
	(\Delta_l^+,\Lambda)\neq(1,0),\ (\Delta_l^+,\Delta_\phi)\neq(1,0),\ \Delta_l<\infty
	.
\]
Note that these assumptions imply that the right hand side in asserted bound is strictly less than $1$. 

We are going to employ \cite[Theorem~1]{romanov:20XX} which provides an upper bound for the order 
of a (arbitrary) Hamiltonian. This theorem is based on finding appropriate approximations of a given Hamiltonian by simple ones. 

\begin{definition}\thlab{M16}
	Let $N\in\bb N$, let $(l_n)_{n=1}^N$ be a finite sequence of positive numbers, and let 
	$(\phi_n)_{n=1}^N$ be a finite sequence of real numbers with $\phi_{n+1} \not \equiv \phi_n \mod \pi$, $n=1,\ldots, N-1$.
	Set 
	\[
		x_0:=0,\qquad x_n:=\sum_{k=1}^nl_n,\ n=1,\ldots,N
		.
	\]
	Then we speak of the function $H:[0,x_N)\to\bb R^{2\!\times\!2}$ which is defined by 
	\[
		H(x):=\xi_{\phi_n}\xi_{\phi_n}^*,\quad x\in[x_{n-1},x_n),\ n=1,\ldots,N
		,
	\]
	as the \emph{finite rank Hamiltonian with parameters} $\langle N,(l_n)_{n=1}^N,(\phi_n)_{n=1}^N\rangle$. 
	\begin{center}
	\setlength{\unitlength}{0.015mm}
	\begingroup\makeatletter\ifx\SetFigFont\undefined%
	\gdef\SetFigFont#1#2#3#4#5{%
	  \reset@font\fontsize{#1}{#2pt}%
	  \fontfamily{#3}\fontseries{#4}\fontshape{#5}%
	  \selectfont}%
	\fi\endgroup%
	{\renewcommand{\dashlinestretch}{30}
	\begin{picture}(6000,1000)(200,100)
	\thicklines
	\path(1320,500)(3520,500)
	\dottedline{120}(3520,500)(4520,500)
	\path(4520,500)(5820,500)
	\path(1320,400)(1320,600)
	\path(2220,400)(2220,600)
	\path(3120,400)(3120,600)
	\path(4920,400)(4920,600)
	\path(5820,400)(5820,600)
	\put(500,450){\makebox(0,0)[lb]{\smash{{\SetFigFont{10}{12.0}{\rmdefault}{\mddefault}{\updefault}$H$:}}}}
	\put(1320,40){\makebox(0,0)[b]{\smash{{\SetFigFont{8}{12.0}{\rmdefault}{\mddefault}{\updefault}$x_0$}}}}
	\put(2220,40){\makebox(0,0)[b]{\smash{{\SetFigFont{8}{12.0}{\rmdefault}{\mddefault}{\updefault}$x_1$}}}}
	\put(3120,40){\makebox(0,0)[b]{\smash{{\SetFigFont{8}{12.0}{\rmdefault}{\mddefault}{\updefault}$x_2$}}}}
	\put(4920,40){\makebox(0,0)[b]{\smash{{\SetFigFont{8}{12.0}{\rmdefault}{\mddefault}{\updefault}$x_{N-1}$}}}}
	\put(5820,40){\makebox(0,0)[b]{\smash{{\SetFigFont{8}{12.0}{\rmdefault}{\mddefault}{\updefault}$x_N$}}}}
	\put(1770,770){\makebox(0,0)[b]{\smash{{\SetFigFont{6}{12.0}{\rmdefault}{\mddefault}{\updefault}$\xi_{\phi_1}\xi_{\phi_1}^*$}}}}
	\put(2670,770){\makebox(0,0)[b]{\smash{{\SetFigFont{6}{12.0}{\rmdefault}{\mddefault}{\updefault}$\xi_{\phi_2}\xi_{\phi_2}^*$}}}}
	\put(5370,770){\makebox(0,0)[b]{\smash{{\SetFigFont{6}{12.0}{\rmdefault}{\mddefault}{\updefault}$\xi_{\phi_N}\xi_{\phi_N}^*$}}}}
	\put(1770,400){\makebox(0,0)[b]{\smash{{\SetFigFont{6}{8.0}{\rmdefault}{\mddefault}{\updefault}$\underbrace{\rule{35pt}{0pt}}_{l_1}$}}}}
	\put(2670,400){\makebox(0,0)[b]{\smash{{\SetFigFont{6}{8.0}{\rmdefault}{\mddefault}{\updefault}$\underbrace{\rule{35pt}{0pt}}_{l_2}$}}}}
	\put(5370,400){\makebox(0,0)[b]{\smash{{\SetFigFont{6}{8.0}{\rmdefault}{\mddefault}{\updefault}$\underbrace{\rule{35pt}{0pt}}_{l_N}$}}}}
	\end{picture}
	}
	\end{center}
\end{definition}

\begin{theorem}[\cite{romanov:20XX}]\thlab{M29}
	Let $L\in(0,\infty)$, and let $H:[0,L)\to\bb R^{2\!\times\!2}$ be a Hamiltonian with $\tr H=1$ a.e.
	Let $d\in(0,1]$, and assume that there exists a family of finite rank Hamiltonians 
	\[
		H^{\star}(R),\ R>1\quad \Big(\text{parameters
		$\big\langle N^{\star}(R),(l_n^{\star}(R))_{n=1}^{N^{\star}(R)},(\phi_n^{\star}(R))_{n=1}^{N^{\star}(R)}\big\rangle$}\Big)
	\]
	and a family of sequences of weights 
	\[
		\big(a_n(R)\big)_{n=1}^{N^{\star}(R)},\ R>1\quad\text{with}\quad a_n(R)\in(0,1]
		,
	\]
	such that (\/${\rm O}$-notation is understood for $R\to\infty$ and $\|.\|$ denotes any matrix norm)
	\begin{enumerate}[$(i)$]
	\item ${\displaystyle \sum_{k=1}^{N^{\star}(R)}\frac 1{a_k(R)^2}\int\limits_{x_{k-1}^{\star}(R)}^{x_k^{\star}(R)}
		\big\|H(x)-[H^{\star}(R)](x)\big\|\,dx
		={\rm O}\big(R^{d-1}\big)}$,
	\item ${\displaystyle \sum_{k=1}^{N^{\star}(R)}a_k(R)^2l_k^{\star}(R)
		={\rm O}\big(R^{d-1}\big)}$,
	\item ${\displaystyle \sum_{k=1}^{N^{\star}(R)-1}
		\ln\bigg(1+\frac{\big|\sin\big(\phi_{k+1}^{\star}(R)-\phi_k^{\star}(R)\big)\big|}{a_{k+1}(R)a_k(R)}\bigg)
		={\rm O}\big(R^d\big)}$,
	\item ${\displaystyle \big|\ln a_{1}(R)\big|+\big|\ln a_{N^{\star}(R)}(R)\big|+
		\sum_{k=1}^{N^{\star}(R)-1}\Big|\ln\frac{a_{k+1}(R)}{a_k(R)}\Big|
		={\rm O}\big(R^d\big)}$.
	\end{enumerate}
	Then the order of the entries of the monodromy matrix of $H$ does not exceed $d$. 
\end{theorem}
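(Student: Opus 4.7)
The goal is to establish $\log \max_{|z|=R}\|W(L,z)\| = O(R^{d+\epsilon})$ for every $\epsilon>0$, as this yields $\rho\leq d$ for the entries of the monodromy matrix. My strategy is a two-step comparison: for each large $R$, choose the scale-dependent finite rank approximant $H^\star(R)$ furnished by the hypotheses, and split the estimate into (a) a comparison bound for $\|W(L,z)-W^\star(R;L,z)\|$ and (b) a growth bound for $\|W^\star(R;L,z)\|$ itself. The four hypotheses are tailored so that (a) contributes $\exp(O(R^d))$ via hypothesis $(i)$ combined with a Gronwall-style argument starting from the integral representation $W(x,z)=I+z\int_0^x JH(t)W(t,z)\,dt$, and (b) also contributes $\exp(O(R^d))$ through hypotheses $(ii)$, $(iii)$, $(iv)$.

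For (b), which is the heart of the matter, I would use the explicit factorisation of $W^\star(R;L,z)$ as an ordered product of elementary rank-one transfer matrices $M_k(z)=I+zl_k^\star J\xi_{\phi_k^\star}\xi_{\phi_k^\star}^*$, one per constancy interval, and insert telescoping conjugations by the diagonal weight matrices $T_k(R)=\diag(a_k(R),a_k(R)^{-1})$. Writing the product schematically as $T_0^{-1}\cdot (T_0 M_1 T_1^{-1})(T_1 M_2 T_2^{-1})\cdots (T_{N-1}M_N T_N^{-1})\cdot T_N$, each middle factor $T_{k-1}M_k(z)T_k^{-1}$ can be bounded in norm by $1+C\bigl(|z|\,l_k^\star a_k^2+|\sin(\phi_k^\star-\phi_{k-1}^\star)|/(a_k a_{k-1})+|a_k/a_{k-1}-1|\bigr)$. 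Summing the logarithms then produces three contributions which are controlled, respectively, by the three sums in $(ii)$, $(iii)$, and $(iv)$: hypothesis $(ii)$ handles the on-diagonal piece governed by $z l_k^\star a_k^2$; hypothesis $(iii)$ handles the off-diagonal shear produced by angle changes; and hypothesis $(iv)$ together with its boundary terms $|\ln a_1|$ and $|\ln a_N|$ absorbs both the telescoping variation of consecutive weights and the outer factors $T_0^{-1}$ and $T_N$. Each sum being $O(R^d)$, exponentiation gives $\|W^\star(R;L,z)\|\lesssim\exp(CR^d)$ uniformly on $|z|=R$.

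For (a), I would again use the weights: partitioning the integral over $[0,L]$ along the constancy pieces $[x_{k-1}^\star,x_k^\star)$ and conjugating each local contribution by $T_k(R)$ turns the $k$-th piece into an integrand carrying the factor $a_k(R)^{-2}$, which is exactly the weight appearing in hypothesis $(i)$. A Gronwall iteration then yields $\|W(L,z)-W^\star(R;L,z)\|\leq |z|\cdot O(R^{d-1})\cdot\|W\|\|W^\star\|\lesssim\exp(CR^d)$, matching the target. The main obstacle will be making the conjugation argument interlock cleanly in both (a) and (b): the weights $T_k(R)$ are not unitary, so passing from bounds on $\|T_{k-1}M_k T_k^{-1}\|$ back to bounds on the original product requires careful bookkeeping of the boundary corrections, and one must verify that the cancellation of the $T_k$'s survives even when the true Hamiltonian $H$ (which is \emph{not} piecewise constant on the approximant's partition) is substituted in place of $H^\star$. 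This is precisely why hypothesis $(iv)$ is stated with the explicit boundary terms $|\ln a_1(R)|$ and $|\ln a_{N^\star(R)}(R)|$, so that the mismatch at the two endpoints is absorbed into the same $O(R^d)$ budget.
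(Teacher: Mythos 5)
First, a point of orientation: the paper does not prove \thref{M29} at all --- it is imported verbatim from \cite{romanov:20XX} and used as a black box, so there is no internal proof to compare against. Judged on its own terms, your two--step architecture (a weighted multiplicative bound for the monodromy matrix of the finite rank approximant, plus a weighted perturbation estimate for the difference) is indeed the strategy of the cited proof, and your matching of hypotheses $(i)$--$(iv)$ to the two halves is correct in spirit.

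However, the central estimate as you wrote it is false. A single elementary factor $M_k(z)=I-zl_k^\star J\xi_{\phi_k^\star}\xi_{\phi_k^\star}^*$ involves only the one angle $\phi_k^\star$, so bounding $T_{k-1}M_k(z)T_k^{-1}$ cannot produce the difference $\sin(\phi_k^\star-\phi_{k-1}^\star)$; worse, with $T_j=\diag(a_j,a_j^{-1})$ one off-diagonal entry of $T_{k-1}\bigl(zl_k^\star J\xi_{\phi_k^\star}\xi_{\phi_k^\star}^*\bigr)T_k^{-1}$ is of size $|z|l_k^\star\cos^2\phi_k^\star/(a_{k-1}a_k)$ (or $|z|l_k^\star\sin^2\phi_k^\star/(a_{k-1}a_k)$ in the opposite convention), which is \emph{not} dominated by $|z|l_k^\star a_k^2+|\sin(\phi_k^\star-\phi_{k-1}^\star)|/(a_ka_{k-1})$ --- take $\phi_k^\star=\phi_{k-1}^\star=\pi/4$ for a counterexample. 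The missing idea is to first rotate each factor into its own frame: since $\xi_\phi=R_\phi e_1$ and $J$ commutes with rotations, $M_k=R_{\phi_k^\star}\bigl(\begin{smallmatrix}1&0\\-zl_k^\star&1\end{smallmatrix}\bigr)R_{\phi_k^\star}^{-1}$, and in the ordered product the rotations telescope into relative rotations $R_{\phi_{k+1}^\star-\phi_k^\star}$. Only after this regrouping do the weights go in, between the unipotent and rotation factors: $\diag(a_k^{-1},a_k)\bigl(\begin{smallmatrix}1&0\\-zl_k^\star&1\end{smallmatrix}\bigr)\diag(a_k,a_k^{-1})$ has norm at most $1+|z|l_k^\star a_k^2$ (hypothesis $(ii)$ times $|z|=R$), while the logarithm of the norm of the weighted relative rotation is controlled by $|\ln(a_{k+1}/a_k)|+\ln\bigl(1+|\sin(\phi_{k+1}^\star-\phi_k^\star)|/(a_ka_{k+1})\bigr)$, i.e.\ by hypotheses $(iii)$ and $(iv)$. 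Similarly, your displayed comparison inequality is circular ($\|W\|$ appears on both sides); it must be replaced by writing $W=W^\star V$ and running Gronwall on $V$, with hypothesis $(i)$ controlling the weighted norms of the conjugated $(W^\star)^{\pm1}$ on each constancy interval. All of this is repairable and, once repaired, reproduces the proof in \cite{romanov:20XX}; but as written the key factor bound does not hold.
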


\begin{ndefinition}{Notation}\thlab{M74}
	It turns out useful to agree on the following abbreviations.
	\begin{enumerate}[$(i)$]
	\item If $X,Y:[1,\infty)\to[0,\infty)$, then we define 
		\[
			X(R)\preceq Y(R)
			\quad \Longleftrightarrow\quad
			\forall\,\epsilon>0\ \exists\,C>0\ \forall\,R\geq 1:\ X(R)\leq CR^\epsilon Y(R)
		\]
	\item If $x,y\in\bb R$ with $x<y$, and $X_k\in\bb C$ for $k\in[x,y]\cap\bb Z$, then we write 
		\[
			\sum_{k\geq x}^yX_k:=\sum_{k\in[x,y]\cap\bb Z}X_k
			.
		\]
	\end{enumerate}
\end{ndefinition}
	
\noindent
We set $\Delta_\phi':=\Delta_\phi$ if $\Delta_\phi<\infty$, and let $\Delta_\phi'$ be an arbitrary number larger than $1$ 
if $\Delta_\phi=\infty$. 
Next, for $\phi\in[0,\pi)$, set 
\[  
	\Lambda(\phi):=\sup\Big\{\tau\geq 0:\sum_{j=n}^\infty l_j|\sin(\phi_j-\phi)|={\rm O}\big(n^{1-\Delta_l^+-\tau}\big)\Big\}
	\in[0,\infty]
	.
\]
Again, set $\Lambda(\phi)':=\Lambda(\phi)$ if $\Lambda(\phi)<\infty$, and let 
$\Lambda(\phi)'$ be an arbitrary number larger than $1$ if $\Lambda(\phi)=\infty$. 

Consider $\phi\in[0,\pi)$ such that $(\Delta_l^+,\Lambda(\phi)')\neq (1,0)$, and let $d(\phi)$ be any number with 
\begin{equation}\label{M76}
	1>d(\phi)>
	\begin{cases}
		\frac 1{\Delta^+_l+\Delta_\phi'} 
		&\hspace*{-3mm},\quad \Delta^+_l+\Delta_\phi'\geq 2,
		\\[2mm]
		\max\Big\{\frac 1{\Delta^+_l+\Delta_\phi'},
		\frac{1-\Delta_\phi'+\frac 12\Lambda(\phi)'}{\Delta^+_l-\Delta_\phi'+\Lambda(\phi)'}\Big\}
		&\hspace*{-3mm},\quad \Delta^+_l+\Delta_\phi'<2.
	\end{cases}
\end{equation}
Given $R>1$ we define an approximating Hamiltonian as a cut-off of $H$ prolonged by one interval with angle $\phi$. 
The cutting point will be the node $x_{N(R)}$ where 
\[
	N(R)=\Big\lfloor R^{\frac{1-d(\phi)}{\Delta_l^+-1+\Lambda(\phi)'/2}}\Big\rfloor
	.
\]
Note that the value $(1-d(\phi))(\Delta_l^+-1+\Lambda(\phi)'/2)^{-1}$ appearing in the exponent is positive. 
Now we define 
\begin{align*}
	& N^{\star}(R):=N(R)+1,
	\\
	& l_n^{\star}(R):=
	\begin{cases}
		l_n &\hspace*{-3mm},\quad n=1,\ldots,N(R)
		\\
		x_\infty-x_{N(R)} &\hspace*{-3mm},\quad n=N^{\star}(R)
	\end{cases}
	\\
	& \phi_n^{\star}(R):=
	\begin{cases}
		\phi_n &\hspace*{-3mm},\quad n=1,\ldots,N(R)
		\\
		\phi &\hspace*{-3mm},\quad n=N^{\star}(R)
	\end{cases}
\end{align*}
and let $H^{\star}(R)$ be the finite rank Hamiltonian given by this data. 

The required weights $a_n(R)$ are defined by (here we set $\sigma:=(\Delta_l^++\Delta_\phi')^{-1}$)
\[
	a_n(R)^2 :=
	\begin{cases}
		\frac 1R n^{\Delta_l} &\hspace*{-3mm},\quad 1\leq n\leq R^{\sigma},
		\\
		\frac 1{\sqrt R} n^{\frac 12(\Delta_l^+-\Delta_\phi')} &\hspace*{-3mm},\quad R^{\sigma}<n\leq N(R),
		\\
		n^{-\frac 12\Lambda(\phi)'} &\hspace*{-3mm},\quad n=N^{\star}(R)
		.
	\end{cases}
\]
We need to check that $a_n(R)\leq 1$. This is clear in all cases except when $n\in(R^{\sigma},N(R)]$ and $\Delta_l^+>\Delta_\phi'$. 
Then it amounts to showing 
\begin{equation}\label{M75}
	\frac{1-d(\phi)}{\Delta_l^+-1+\Lambda(\phi)'/2}(\Delta_l^+-\Delta_\phi')\leq 1
	,
\end{equation}
or, equivalently,  
\begin{equation}\label{M755}
	\frac{1-\Delta_\phi'-\frac 12\Lambda(\phi)'}{\Delta_l^+-\Delta_\phi'}\leq d(\phi)
	.
\end{equation}
To this end, notice that
\[
		\frac{1-\Delta_\phi'-\frac 12\Lambda(\phi)'}{\Delta_l^+-\Delta_\phi'}\leq
		\frac{1-\Delta_\phi'+\frac 12\Lambda(\phi)'}{\Delta_l^+-\Delta_\phi'+\Lambda(\phi)'}
		,
	\]
for $ a/b \le ( a +x )/ ( b+x) $ if $ a \le b $, $ x \ge 0 $, $ b > 0 $.  This implies \eqref{M755} in the case       $\Delta_l^++\Delta_\phi'<2$. In the case  $\Delta_l^++\Delta_\phi'\geq 2$ the inequality
	\[
		\frac{1-\Delta_\phi'}{\Delta_l^+-\Delta_\phi'}\leq\frac 1{\Delta_l^++\Delta_\phi'} , 
	\]
	holds, and \eqref{M755} follows because $ \Lambda(\phi)' \ge 0 $.

We now show that with the above approximation and $d:=d(\phi)+\epsilon$, where $\epsilon>0$ is arbitrary, 
the hypotheses of \thref{M29} are satisfied. 
To shorten notation, we drop the argument $R$ whenever convenient.

\hspace*{0pt}\\[2mm]
\textit{Item $(i)$:}
\begin{multline*}
	\sum_{k=1}^{N+1}\frac 1{a_k^2} \int_{x^{\star}_{k-1}}^{x^{\star}_k}\|H(x)-H^{\star}(x)\|\,dx=
	\frac 1{a_{N+1}^2}\int_{x_{N}}^{x_\infty}\|H(x)-\xi_\phi \xi_\phi^T\|\,dx
	\\
	\lesssim N^{ \Lambda(\phi)'/2 } \sum_{j=N+1}^\infty l_j|\sin(\phi_j-\phi)|
	\preceq N^{ \Lambda(\phi)'/2 } N^{ 1 - \Delta^+_l - \Lambda(\phi)' }\leq R^{d(\phi)-1} 
	.
\end{multline*}

\hspace*{0pt}\\[2mm]
\textit{Item $(ii)$:}
\begin{equation}\label{M78}
	\sum_{k=1}^{N+1}a_k^2l_k^{\star} = \frac 1R\sum_{k\geq 1}^{R^{\sigma}}k^{\Delta_l}l_k
	+\frac 1{\sqrt R}\sum_{k\geq R^{\sigma}}^N k^{(\Delta_l^+-\Delta_\phi')/2}l_k+N^{-\Lambda(\phi)'/2}(x_\infty-x_N)
	.
\end{equation}
 Since $l_k\preceq k^{-\Delta_l}$, the first term in the right hand side satisfies 
\[
	\frac 1R\sum_{k\geq 1}^{R^{\sigma}}k^{\Delta_l}l_k\preceq R^{\sigma-1}\leq R^{d(\phi)-1} 
	.
\]
Since $x_\infty-x_N=\sum_{k=N+1}^\infty l_k\preceq N^{1-\Delta_l^+}$, we have 
\[
	N^{-\Lambda(\phi)'/2}(x_\infty-x_N)\preceq N^{-(\Delta_l^+-1+\Lambda(\phi)'/2)}\lesssim R^{d(\phi)-1}
	.
\]
In order to estimate the second term on the right side of \eqref{M78}, we distinguish the cases that $\Delta_l^+>1$ and $\Delta_l^+=1$. 
\begin{list}{}{\leftmargin=0pt}
\item --- \textit{Case $\Delta_l^+>1$:} 
	Then $\Delta_l=\Delta_l^+$, and we obtain 
	\begin{align*}
		\frac 1{\sqrt R}\sum_{k\geq R^{\sigma}}^N k^{(\Delta_l^+-\Delta_\phi')/2}l_k\preceq &\ 
		\frac 1{\sqrt R}\sum_{k\geq R^{\sigma}}^N k^{-(\Delta_l^++\Delta_\phi')/2}
		\\
		\lesssim &\ \frac 1{\sqrt R}
		\begin{cases}
			(R^{\sigma})^{1-(\Delta_l^++\Delta_\phi')/2} &\hspace*{-3mm},\quad \Delta_l^++\Delta_\phi'>2
			\\
			\ln N &\hspace*{-3mm},\quad \Delta_l^++\Delta_\phi'=2
			\\ 
			N^{1-(\Delta_l^++\Delta_\phi')/2} &\hspace*{-3mm},\quad \Delta_l^++\Delta_\phi'<2
		\end{cases}
	\end{align*}
	In the first case, since $\sigma\leq\frac 12$ and $(\Delta_l^++\Delta_\phi)/2\geq 1$, it holds that 
	$(R^{\sigma})^{1-(\Delta_l^++\Delta_\phi')/2}=R^{\sigma-\frac 12}\leq R^{d(\phi)-\frac 12}$. 
	In the second case $d(\phi)>\frac 12$ and hence $\ln N\lesssim R^{d(\phi)-\frac 12}$. In the third case, \eqref{M755} implies that
	\[%\label{M79}
		\frac{1-d(\phi)}{\Delta_l^+-1+\Lambda(\phi)'/2}\Big(1-\frac{\Delta_l^++\Delta_\phi'}2\Big)\leq d(\phi)-\frac 12 .
	\]
We find that $N^{1-(\Delta_l^++\Delta_\phi')/2}\lesssim R^{d(\phi)-\frac 12}$. 
\item --- \textit{Case $\Delta_l^+=1$:}
	\begin{align*}
		\frac 1{\sqrt R}\sum_{k\geq R^{\sigma}}^N k^{(\Delta_l^+-\Delta_\phi')/2}l_k \leq &\ 
		\frac 1{\sqrt R}\Big[\sum_{k\geq R^{\sigma}}^N l_k\Big]\max_{R^{\sigma}\leq k\leq N}k^{(1-\Delta_\phi')/2}
		\\
		\leq &\ \frac 1{\sqrt R}
		\begin{cases}
			(R^{\sigma})^{(1-\Delta_\phi')/2} &\hspace*{-3mm},\quad \Delta_\phi'\geq 1
			\\
			N^{(1-\Delta_\phi')/2} &\hspace*{-3mm},\quad \Delta_\phi'<1
		\end{cases}
	\end{align*}
	In the case $ \Delta_\phi^\prime \ge 1 $ observe that 
	\[
		\sigma\frac{1-\Delta_\phi'}2\leq d(\phi)-\frac 12
		\ \Leftrightarrow\ 
		1-\Delta_\phi'\leq2\frac{d(\phi)}{\sigma}-(1+\Delta_\phi')
		\ \Leftrightarrow\ 
		1\leq\frac{d(\phi)}{\sigma}
		,
	\]
	and in the case $ \Delta_\phi^\prime < 1 $ that
	\[
		\frac{1-d(\phi)}{\Lambda (\phi)^\prime /2}\frac{1-\Delta_\phi'}2\leq d(\phi)-\frac 12
		\ \Leftrightarrow\ 
		1-\Delta_\phi'+\frac{\Lambda(\phi)'}2\leq d(\phi)\big[1-\Delta_\phi'+\Lambda(\phi)'\big]
		.
	\]
\end{list}

Thus, the right hand side in \eqref{M78} is ${\rm O}( R^{ d (\phi)-1 } ) $ in all cases.

\hspace*{0pt}\\[2mm]
\textit{Item $(iii)$:}
The weights $a_n$ are, independently of $n$, bounded from below by an appropriate power of $R$. Hence, 
we have $\ln\Big(1+\frac{|\sin(\phi_{k+1}^{\star}-\phi_k^{\star})|}{a_{k+1}a_k}\Big)\lesssim\ln R$, and obtain 
\begin{align*}
	\sum_{k\geq 1}^{R^{\sigma}+1}
	\ln\bigg(1+\frac{|\sin(\phi_{k+1}^{\star}-\phi_k^{\star})|}{a_{k+1}a_k}\bigg)
	&\ \lesssim R^{\sigma}\ln R\lesssim R^{d(\phi)}
	,
	\\
	\ln\bigg(1+\frac{|\sin(\phi_{N+1}^{\star}-\phi_N^{\star})|}{a_{N+1}a_N}\bigg)
	&\ \lesssim\ln R\lesssim R^{d(\phi)}
	.
\end{align*}
In the remaining part of the sum the second line of the definition of weights applies to the effect that
\begin{align*}
	\sum_{k\geq R^{\sigma}+1}^{N-1} &
	\ln\bigg(1+\frac{|\sin(\phi_{k+1}^{\star}-\phi_k^{\star})|}{a_{k+1}a_k}\bigg)
	\leq 
	\sum_{k\geq R^{\sigma}+1}^{N-1} \frac 1{a_{k+1}a_k}\big|\sin(\phi_{k+1}-\phi_k)\big|
	\\
	&\ =\sum_{k\geq R^{\sigma}+1}^{N-1} 
	\sqrt R\cdot[(k+1)k]^{-\frac 14(\Delta_l^+-\Delta_\phi')}\cdot\big|\sin(\phi_{k+1}-\phi_k)\big|
	\\
	&\ \lesssim
	\sqrt R\cdot\sum_{j\geq 1}^{\log_2(N/R^{\sigma})+1} 
	\Big(\sum_{k\geq 2^{j-1}R^{\sigma}}^{2^jR^{\sigma}-1}\big|\sin(\phi_{k+1}-\phi_k)\big|\,\Big)
	\max_{k\in[2^{j-1}R^{\sigma},2^jR^{\sigma}]}k^{-\frac 12(\Delta_l^+-\Delta_\phi')}
	\\
	&\ \lesssim 
	\sqrt R\cdot\sum_{j\geq 1}^{\log_2(N/R^{\sigma})+1} 
	\Big(\sum_{k\geq 2^{j-1}R^{\sigma}}^{2^jR^{\sigma}-1}\big|\sin(\phi_{k+1}-\phi_k)\big|\,\Big)
	\big(2^jR^{\sigma}\big)^{-\frac 12(\Delta_l^+-\Delta_\phi')} 
	\\
	&\ \preceq
	\sqrt R\cdot(R^{\sigma})^{-\frac 12(\Delta_l^+-\Delta_\phi')}
	\sum_{j\geq 1}^{\log_2(N/R^{\sigma})+1}
	\big(2^{j-1}R^{\sigma}\big)^{1-\Delta_\phi'}\cdot 2^{-j\frac 12(\Delta_l^+-\Delta_\phi')}
	\\
	&\ \asymp
	\sqrt R\cdot(R^{\sigma})^{1-(\Delta_l^++\Delta_\phi')/2}
	\sum_{j\geq 1}^{\log_2(N/R^{\sigma})+1} 2^{j[1-(\Delta_l^++\Delta_\phi')/2]}
	\\
	&\ \lesssim
	\sqrt R\cdot
	\begin{cases}
		(R^{\sigma})^{1-(\Delta_l^++\Delta_\phi')/2} &\hspace*{-3mm},\quad \Delta_l^++\Delta_\phi'>2
		\\
		\ln R &\hspace*{-3mm},\quad \Delta_l^++\Delta_\phi'=2
		\\ 
		N^{1-(\Delta_l^++\Delta_\phi')/2} &\hspace*{-3mm},\quad \Delta_l^++\Delta_\phi'<2
	\end{cases}
\end{align*}
By what we showed in the proof of ``Item~$(ii)$'', the last expression is in all cases $\lesssim R^{d(\phi)}$. 

\hspace*{0pt}\\[2mm]
\textit{Item $(iv)$:}
As we already observed, the weights $a_n$ are bounded below by some power of $R$. Hence, each summand appearing in 
``Item~$(iv)$'' is $\lesssim\ln R$. Moreover, we have 
\[
	\Big(\frac{a_{k+1}}{a_k}\Big)^2=
	\begin{cases}
		\big(\frac{k+1}k\big)^{\Delta_l} &\hspace*{-3mm},\quad 1\leq k\leq R^{\sigma}-1,
		\\
		\big(\frac{k+1}k\big)^{\frac 12(\Delta_l^+-\Delta_\phi')} &\hspace*{-3mm},\quad R^{\sigma}<k\leq N-1,
	\end{cases}
\]
and together it follows that 
\[
	\big|\ln a_1\big|+\big|\ln a_{N+1}\big|+\sum_{k=1}^N\Big|\ln\frac{a_{k+1}}{a_k}\Big|
	\lesssim\ln R+\sum_{k=1}^N\ln\big(1+\frac 1k\big)\lesssim\ln R\lesssim R^{d(\phi)}
	.
\]
We see that \thref{M29} is indeed applicable, and yields that $\rho(H)\leq d(\phi)+\epsilon$. 

The proof of \thref{M2} is completed 
by letting $\epsilon\searrow 0$, passing to the infimum over all $d(\phi)$ subject to \eqref{M76}, passing to the supremum over all 
$\phi$ subject to $(\Delta_l^+,\Lambda(\phi)')\neq (1,0)$, and -- if necessary -- letting $\Delta_\phi'\nearrow\infty$ and 
$\Lambda(\phi)'\nearrow\infty$.

\begin{remark}\thlab{M89}
	Choosing the approximating Hamiltonian as a cut-off of $H$ is of course natural. 
	The choice of the weights $ a_j $ in the proof is based on term-by-term optimization in conditions $(ii)$ and $(iii)$ 
	assuming that the replacement of $ a_j a_{ j+1 } $ in the denominator in $(iii)$ by $ a_j^2 $ does not spoil the estimate too much. 
\end{remark}

\subsection{A lower bound for $\rho(H)$}

The following limes inferior appears naturally in estimating order from below. 

\begin{definition}\thlab{R1}
	Let $H$ be a Hamburger Hamiltonian with lengths $\vec l$ and angles $\vec\phi$. 
	Then we set 
	\[
		\delta_{l,\phi}(H):=\liminf_{m\to\infty}\Big[
		G(m;\vec l,{\textstyle\frac 12})+G\big(m;(|\sin(\phi_{n+1}-\phi_n)|)_{n=1}^\infty,0\big)
		\Big]
		.
	\]
\end{definition}

\noindent
The meaning of $\delta_{l,\phi}$ is easily explained. 
Recall the notation  $b_{n,n}$ for the leading coefficient of the orthogonal polynomial of the first kind of degree $n$. 
Plugging \eqref{R49} into \eqref{R55} we have 
\[
	b_{m,m}=\bigg(\prod_{k=1}^{m-1}\rho_k\bigg)^{-1}=\prod_{k=1}^{m-1}|\sin(\phi_{k+1}-\phi_k)|\sqrt{l_kl_{k+1}},
\]
from whence 
\begin{equation}\label{R37}
	\delta_{l,\phi}=\liminf_{m\to\infty}\frac{-\ln b_{m,m}}{m\ln m}
\end{equation}
The following fact is now nothing but \cite[Proposition 7.1(iii)]{berg.szwarc:2014}. 

\begin{proposition}\thlab{R2}
	Let $H$ be a limit circle Hamburger Hamiltonian with lengths $\vec l$ and angles $\vec\phi$. 
	Then
	\begin{equation}\label{R51}
		\rho(H)\geq\frac 1{\delta_{l,\phi}(H)}
		.
	\end{equation}
\end{proposition}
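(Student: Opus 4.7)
The plan is to apply formula \eqref{R48} from the introduction, which expresses the order in terms of the coefficients $b_{k,n}$ of the orthonormal polynomials of the first kind, and simply drop all off-diagonal summands --- this is Liv\v{s}ic's original maneuver yielding \eqref{Livsic}, translated into the canonical-systems picture through the identification $\rho(H)=\rho((s_n)_{n=0}^\infty)$ coming from the Kac correspondence.

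Since the moment problem is indeterminate, the series $\sum_{n=0}^\infty b_{k,n}^2$ converges, and its tail $\sum_{n=k}^\infty b_{k,n}^2$ tends to $0$ as $k\to\infty$. Hence for all sufficiently large $k$ both $\sum_{n=k}^\infty b_{k,n}^2$ and $b_{k,k}^2$ lie in $(0,1)$ and their logarithms are negative. The trivial lower bound $\sum_{n=k}^\infty b_{k,n}^2 \geq b_{k,k}^2$ then reads $-\ln\sum_{n=k}^\infty b_{k,n}^2 \leq -2\ln b_{k,k}$ with both sides positive. Dividing the positive quantity $2k\ln k$ by them reverses the inequality and gives
\[
    \frac{-2k\ln k}{\ln\sum_{n=k}^\infty b_{k,n}^2}
    \;\geq\;
    \frac{k\ln k}{-\ln b_{k,k}}.
\]

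I would then pass to $\limsup$ as $k\to\infty$: by \eqref{R48} the left-hand side is $\rho(H)$, while the right-hand side equals $\bigl(\liminf_k \frac{-\ln b_{k,k}}{k\ln k}\bigr)^{-1}$. Identity \eqref{R37}, recorded in the paragraph preceding the proposition by combining \eqref{R49} with \eqref{R55}, identifies this liminf with $\delta_{l,\phi}(H)$, yielding \eqref{R51}.

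There is essentially no substantial obstacle here --- the entire content of the argument is the trivial termwise lower bound for a sum of nonnegative reals --- so the only care needed is the sign bookkeeping when moving between small positive numbers, their (negative) logarithms, and their reciprocals. The point worth emphasising is that \eqref{R51} is just Liv\v{s}ic's estimate \eqref{Livsic} lifted to the language of Hamburger Hamiltonians via \eqref{R37}.
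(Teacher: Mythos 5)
Your argument is correct, and it rests on the same two pillars as the paper's: the Kac correspondence $\rho(H)=\rho((s_n)_{n=0}^\infty)$ and the identity \eqref{R37} expressing $\delta_{l,\phi}$ through the leading coefficients $b_{n,n}$. The only difference is which Berg--Szwarc result you enter through: you invoke the exact formula \eqref{R48} (Theorem~3.1 of \cite{berg.szwarc:2014}) and drop all summands except $b_{k,k}^2$, whereas the paper's proof instead cites Proposition~7.1(iii) of the same reference, which already says $\rho(H)$ dominates the order of $\sum_{n}b_{n,n}z^n$, and then applies the standard Cauchy--Hadamard order formula \cite[Theorem~2.2.2]{boas:1954}. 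Your route is marginally more self-contained in that it derives the inequality from an exact identity; the paper's route is shorter because the cited proposition is already stated as the needed inequality. Both hinge on the same monotonicity and sign bookkeeping, which you handle correctly (both $\sum_{n\geq k}b_{k,n}^2$ and $b_{k,k}^2$ eventually lie in $(0,1)$, so the logarithms are negative and the termwise bound passes correctly through reciprocals and the $\limsup$). One small remark: you do not actually need the convergence of $\sum_n b_{k,n}^2$ to be argued independently --- it is already implicit in \eqref{R48} being a finite number for an indeterminate problem, so the reference to Theorem~3.1 covers that point.
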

\begin{proof}
	According to \cite[Proposition 7.1(iii)]{berg.szwarc:2014}, $\rho(H)$ is greater or equal to the order of the entire function 
	$\sum_{n=0}^\infty b_{n,n} z^n$. The assertion follows from the standard formula for the order of an entire function 
	in terms of its Taylor coefficients, see, e.g., \cite[Theorem 2.2.2]{boas:1954}. 
\end{proof}

\begin{remark}\thlab{M77} 
	It is possible to prove \thref{R2} directly, i.e., without addressing the correspondence between Hamburger Hamiltonians and orthogonal
	polynomials. To this end, one has to use the multiplicative representation of the monodromy
	matrix $W(L,z)$ by monodromy matrices corresponding to intervals $(x_{j-1},x_j)$, and take into account the fact that the matrix
	elements of $W(x_j,z)$ are polynomials in $z$ with real roots and so their moduli at $z=i\tau$, $\tau\in\mathbb R$, are
	estimated from below by the absolute value of the respective leading coefficients.

	The interested reader is referred to the preprint version \cite{pruckner.romanov.woracek:jaco-ASC} of this article, 
	where we included this alternative proof of \thref{R2}.
\end{remark}

\noindent
Sometimes it is useful to look at the lengths and angles of a Hamburger Hamiltonian separately.
In fact, this viewpoint is vital when comparing the lower bound \eqref{R51} for $\rho(H)$ with the upper bound established in \thref{M2}. 
Also, it helps when considering concrete examples. 

\begin{definition}\thlab{R3}
	Let $H$ be a Hamburger Hamiltonian with lengths $\vec l$ and angles $\vec\phi$. 
	Then we set 
	\begin{align*}
		& \delta_l(H):=\liminf_{m\to\infty} G(m;\vec l,{\textstyle\frac 12}),
		\\
		& \delta_\phi(H):=\liminf_{m\to\infty} G\big(m;(|\sin(\phi_{n+1}-\phi_n)|)_{n=1}^\infty,0\big)
		.
	\end{align*}
\end{definition}

\noindent
The next statement is an immediate corollary of \thref{R2}.

\begin{corollary}\thlab{R52}
	Let $H$ be a Hamburger Hamiltonian with lengths $\vec l$ and angles $\vec\phi$. 
	Assume that at least one of $\delta_l$ and $\delta_\phi$ exists as a limit. Then
	\[
		\rho(H) \geq \frac{1}{\delta_l+\delta_\phi}
		.
	\]
\end{corollary}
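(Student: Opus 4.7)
The plan is to reduce the claim directly to Proposition \ref{R2} by proving the additivity identity
\[
\delta_{l,\phi}(H) \;=\; \delta_l(H)+\delta_\phi(H),
\]
which, once established, gives $\rho(H)\ge 1/\delta_{l,\phi}(H)=1/(\delta_l+\delta_\phi)$ by Proposition \ref{R2}, finishing the proof. In principle one has only the general subadditivity
\[
\liminf_{m\to\infty}(a_m+b_m)\;\ge\;\liminf_{m\to\infty}a_m+\liminf_{m\to\infty}b_m,
\]
which would point in the wrong direction; the hypothesis that one of $\delta_l$, $\delta_\phi$ actually \emph{exists as a limit} is what restores equality.

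To make this precise, write $a_m:=G(m;\vec l,\tfrac12)$ and $b_m:=G\bigl(m;(|\sin(\phi_{n+1}-\phi_n)|)_{n=1}^\infty,0\bigr)$, so that $\delta_l=\liminf_m a_m$, $\delta_\phi=\liminf_m b_m$, and $\delta_{l,\phi}=\liminf_m(a_m+b_m)$. Without loss of generality assume $\delta_l$ is a genuine limit (the other case is symmetric). If $\delta_l=+\infty$, both sides of the desired identity are $+\infty$ and the conclusion of the corollary becomes the trivial lower bound $\rho(H)\ge 0$, so we may assume $\delta_l\in\mathbb{R}$. Then $a_m\to\delta_l$, and the standard fact that a convergent perturbation does not affect the liminf gives
\[
\liminf_{m\to\infty}(a_m+b_m)\;=\;\delta_l+\liminf_{m\to\infty}b_m\;=\;\delta_l+\delta_\phi,
\]
which is exactly the asserted identity.

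Combining this with Proposition \ref{R2} yields $\rho(H)\ge 1/(\delta_l+\delta_\phi)$, as required. There is no real obstacle: the argument is essentially a one-line observation about liminf, and the content of the corollary lies entirely in the already-proved Proposition \ref{R2}. The only point that requires a moment of care is the handling of infinite values (where the inequality becomes vacuous), but these cases are dealt with by the convention that $1/\infty=0$.
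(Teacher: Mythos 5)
Your proposal is correct and is exactly the reasoning the paper has in mind: the paper itself gives no written proof, merely declaring Corollary~\ref{R52} an ``immediate corollary'' of Proposition~\ref{R2}, and the intended (unwritten) step is precisely the additivity $\delta_{l,\phi}=\delta_l+\delta_\phi$ under the hypothesis that one of the two is a genuine limit. You correctly identified that the hypothesis is needed because the unconditional inequality $\liminf(a_m+b_m)\ge\liminf a_m+\liminf b_m$ goes the wrong way, and that the one-sided convergence restores equality via the standard $\liminf$-shift argument; the edge cases with infinite values are handled as you say (both $\delta_l,\delta_\phi\ge 0$ since the underlying sequences are bounded, so $-\infty$ never arises, and the conclusion is vacuous when the sum is $+\infty$).
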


\noindent
Let us provide an example that the additional assumption in the corollary cannot be dropped. 

\begin{proposition}\thlab{R36}
	Let $\alpha>\beta>1$ and $\gamma>0$ with $\beta+\frac\gamma2<\alpha<\beta+2\gamma$ be given. 
	Consider the Hamburger Hamiltonian $H$ with lengths and angles given by 
	\[ 
		l_n:=
		\begin{cases} 
			n^{-\alpha} &\hspace*{-3mm},\quad 2^{2j}\leq n<2^{2j+1},j\in\bb N_0,
			\\[2mm]
			n^{-\beta} &\hspace*{-3mm},\quad 2^{2j-1}\leq n<2^{2j},j\in\bb N, 
		\end{cases} 
	\] 
	\[ 
		\phi_1:=0,\qquad
		\phi_{n+1}-\phi_n:=
		\begin{cases} 
			\frac \pi2 &\hspace*{-3mm},\quad 2^{2j}\leq n<2^{2j+1},j\in\bb N_0,
			\\[2mm]
			n^{-\gamma} &\hspace*{-3mm},\quad 2^{2j-1}\leq n<2^{2j},j\in\bb N. 
		\end{cases} 
	\] 
	Then $\rho(H)<\frac{1}{\delta_l+\delta_\phi}$. 
\end{proposition}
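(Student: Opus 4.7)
My plan is to prove the strict inequality $\rho(H)<1/(\delta_l+\delta_\phi)$ by computing $\delta_l$, $\delta_\phi$, $\delta_{l,\phi}$ explicitly, showing $\delta_{l,\phi}>\delta_l+\delta_\phi$, and establishing a matching upper bound $\rho(H)\le 1/\delta_{l,\phi}$ via a direct application of \thref{M29}. Together with \thref{R2}, which gives $\rho(H)\ge 1/\delta_{l,\phi}$, this yields the claim.

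First I compute $\delta_l$ and $\delta_\phi$ along the dyadic subsequence $m=2^N$. The sums $-\sum_{k<m}\ln l_k$ and $-\sum_{k<m}\ln|\sin(\phi_{k+1}-\phi_k)|$ split into block-by-block contributions (only $\beta$-blocks contribute to the second sum), each of which is geometric. Using the asymptotic $\sum_{j=1}^J j\cdot 4^j\sim(4/3)J\cdot 4^J$, I obtain $G(2^{2J+1};\vec l,1/2)\to(2\alpha+\beta)/3$ and $G(2^{2J+2};\vec l,1/2)\to(\alpha+2\beta)/3$, and correspondingly the limits $\gamma/3$ and $2\gamma/3$ for the angle sequence. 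The finite difference of $G(\cdot;\vec l,1/2)$ has the sign of the current block-exponent minus the current value of $G$, so $G(\cdot;\vec l,1/2)$ is monotone inside each block. Hence $\delta_l=(\alpha+2\beta)/3$ is attained at the ends of $\beta$-blocks while $\delta_\phi=\gamma/3$ is attained at the ends of $\alpha$-blocks -- the two liminfs occur on \emph{disjoint} subsequences -- and $\delta_l+\delta_\phi=(\alpha+2\beta+\gamma)/3$.

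Next I compute $\delta_{l,\phi}$. The same finite-difference analysis shows that the combined quantity is monotone inside each block, its difference being proportional to $\alpha-(G_l+G_\phi)$ on $\alpha$-blocks and to $(\beta+\gamma)-(G_l+G_\phi)$ on $\beta$-blocks. Thus its liminf is attained at block endpoints and equals the smaller of $(2\alpha+\beta+\gamma)/3$ (end of $\alpha$-block) and $(\alpha+2\beta+2\gamma)/3$ (end of $\beta$-block). In each case $\delta_{l,\phi}-(\delta_l+\delta_\phi)$ equals $(\alpha-\beta)/3$ (if $\alpha<\beta+\gamma$) or $\gamma/3$ (if $\alpha>\beta+\gamma$); both are strictly positive, so $1/\delta_{l,\phi}<1/(\delta_l+\delta_\phi)$.

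Finally I need $\rho(H)\le 1/\delta_{l,\phi}$. The direct application of \thref{M2} gives only $\rho(H)\le 1/\beta$ (since $\Delta_\phi=0$ because of the $\alpha$-blocks), which is too weak. Instead I apply \thref{M29} directly with a block-adapted approximating Hamiltonian: for each $R>1$, cut $H$ at the endpoint $x_{N(R)}$ of the block-type realizing the minimum in $\delta_{l,\phi}$, with $N(R)$ a carefully chosen power of $R$, and prolong by one interval. The weights $a_n(R)$ are defined piecewise on $\alpha$- and $\beta$-blocks with \emph{different} scaling exponents, optimized to balance conditions $(i)$ and $(iii)$ of \thref{M29} separately on each block type (rather than uniformly as in the proof of \thref{M2}). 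The asymmetric hypothesis $\beta+\gamma/2<\alpha<\beta+2\gamma$ is precisely what ensures this optimization yields $d=1/\delta_{l,\phi}+\epsilon$ in both regimes $\alpha\gtrless\beta+\gamma$. The main obstacle is implementing this block-adapted \thref{M29}-argument: whereas the proof of \thref{M2} uses a single two-scale family of weights, the alternating two-scale structure here requires a three-piece weight family, and verifying conditions $(i)$--$(iv)$ block-by-block is the technical heart of the argument.
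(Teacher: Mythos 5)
Your computation of $\delta_l$, $\delta_\phi$, and $\delta_{l,\phi}$ is correct and matches the paper's values (after accounting for your different block indexing convention), and the structural observation that the two liminfs are attained on disjoint dyadic subsequences -- so that $\delta_{l,\phi}>\delta_l+\delta_\phi$ -- is exactly the point the example is built on. The problem is with your final step. You have reduced the proposition to proving the upper bound $\rho(H)\le 1/\delta_{l,\phi}$, which you then sketch only as a plan for a new block-adapted instantiation of \thref{M29}, and explicitly flag the block-by-block verification of conditions $(i)$--$(iv)$ as an unimplemented ``technical heart''. This is a genuine gap in two respects. First, without that verification the argument is incomplete. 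Second, and more seriously, the target upper bound you have chosen is stronger than what is needed and than what the paper delivers: $1/\delta_{l,\phi}$ equals $\min\{3/(2\alpha+\beta+\gamma),3/(\alpha+2\beta+2\gamma)\}$, and this is strictly smaller than $\max\{1/\alpha,1/(\beta+\gamma)\}$ whenever $\alpha\ne\beta+\gamma$. So your plan amounts to claiming $\rho(H)=1/\delta_{l,\phi}$ exactly, which the paper neither proves nor asserts -- the lower bound from \thref{R2} is not known to be sharp here.

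The paper's route is simpler and avoids this issue. It factors the monodromy matrix into transfer matrices $M_n$ over the individual constancy intervals and groups them by block type. Over the $\alpha$-blocks the angle jumps by $\pi/2$ at every step, so each $M_n$ is off-diagonal of size $1+{\rm O}(|z|n^{-\alpha})$, and one gets directly that the product over all $\alpha$-blocks has order at most $1/\alpha$. Over the $\beta$-blocks the angle rotates slowly (angle difference $\asymp n^{-\gamma}$, length $\asymp n^{-\beta}$); here one invokes the upper estimate from \cite{romanov:20XX} (the same two-scale weight construction used in the proof of \thref{M2}, applied on each block separately), yielding order at most $1/(\beta+\gamma)$ for the product over all $\beta$-blocks. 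By the chain rule this gives $\rho(H)\le\max\{1/\alpha,1/(\beta+\gamma)\}$, and the arithmetic constraints $\beta+\gamma/2<\alpha<\beta+2\gamma$ are precisely what make both $1/\alpha$ and $1/(\beta+\gamma)$ strictly smaller than $1/(\delta_l+\delta_\phi)=3/(\alpha+2\beta+\gamma)$. If you are going to close your gap, the cleanest fix is to drop the ambition of matching the lower bound and instead prove only $\rho(H)\le\max\{1/\alpha,1/(\beta+\gamma)\}$ by this block-wise factorization.
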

\begin{proof}
	By the choice of $\alpha$ being greater than $\beta$, 
	\[
		\delta_l(H)=\lim_{m\to\infty} G(2^{2m};\vec l,{\textstyle\frac 12})=\frac{2\beta+\alpha}3
		.
	\]
	Then 
	\[
		\delta_\phi(H)=\lim_{m\to\infty} G\big(2^{2m+1};(|\sin(\phi_{n+1}-\phi_n)|)_{n=1}^\infty,0\big)=\frac\gamma3
		.
	\] 
	On the other hand, if $M_n(z)$ is the monodromy matrix corresponding to $n$-th interval of the Hamiltonian, 
	then for any $\epsilon>0$
	\[ 
		\log\Big\|\prod_{n=2^{2j}}^{2^{2j+1}-1} M_n(z)\Big\|\leq\log\prod_{n=2^{2j}}^{2^{2j+1}}\Big(1+\frac{|z|}{n^\alpha}\Big)
		\leq C_\epsilon|z|^{1/\alpha+\epsilon}\sum_{n=2^{2j}}^{2^{2j+1}}\frac 1{n^{1+\epsilon}}
		,
	\]
	hence the product of the norms of $M_n$ over $n\in\bigcup_{j\in\bb N_0}[2^{2j},2^{2j+1})$ is estimated above by 
	$e^{C|z|^{1/\alpha+\epsilon}}$. Then,
	\[ 
		\log\Big\|\prod_{n=2^{2j-1}}^{2^{2j}-1} M_n(z)\Big\|\leq 
		C_\epsilon|z|^{1/(\beta+\gamma)+\epsilon}\sum_{n=2^{2j-1}}^{2^{2j}}\frac 1{n^{1+\epsilon}}
		.
	\]
	The proof of this fact can actually be taken verbatim from the proof of the relevant part of Theorem~1 in \cite{romanov:20XX} 
	by choosing the $a_n^2$ as in the proof of \thref{M2} above (in the case $\Delta_l>1$, $\Delta_l+\Delta_\phi>2$). 
	Adding up the obtained estimates in $j$ we find that the product 
	\[ 
		\prod_{j=1}^\infty \Big\|\prod_{n=2^{2j-1}}^{2^{2j}-1} M_n(z)\Big\|
	\]
	is estimated above by $e^{C|z|^{1/(\beta+\gamma)+\epsilon}}$. By the chain rule for the monodromy matrix it follows that 
	$\rho(H)\leq\max\{\alpha^{-1},(\beta+\gamma)^{-1}\}$. Now the condition on parameters $\alpha$, $\beta$ and $\gamma$ 
	in the assumption ensures that 
	\[
		\frac 1\alpha<\delta_l+\delta_\phi\quad\text{and}\quad\frac 1{\beta+\gamma}<\delta_l+\delta_\phi
		.
	\]
\end{proof}

\subsection{Regularly distributed data}

The formula for $\rho(H)$ given in \thref{R24} below is obtained by comparing the upper and lower bounds of
\thref{M2} and \thref{R2}. The decisive property which enables to show that these bounds coincide is a certain 
regularity of the distribution of the sequences of lengths and angle-differences. 

\begin{definition}\thlab{R21}
	We call a sequence $\vec y=(y_n)_{n=1}^\infty$ of positive real numbers \emph{regularly distributed}, if
	\[
		\frac{y_n}{\Big(\prod\limits_{k=1}^n y_k \Big)^{\frac{1}{n}}}={\rm O}(1).
	\]
\end{definition}

\noindent
This notion of regularity rules out heavy oscillations but also sparse peaks where very large or very small elements occur. 

\begin{remark}\thlab{R34}
	Many examples of regularly distributed sequences are provided by the following observations.
	\begin{enumerate}[$(i)$]
	\item Each monotonically decreasing sequence is regularly distributed. 
	\item If $\vec y$ is regularly distributed and $u_n\asymp y_n$, then $\vec u$ is regularly distributed. 
	\end{enumerate}
\end{remark}

\begin{lemma}\thlab{R30}
	Let $\vec y=(y_n)_{n=1}^\infty$ be a bounded and regularly distributed sequence of positive real numbers and let $\alpha\geq 0$. 
	Then 
	\[
		\Delta^*(\vec y)=\Delta(\vec y)=\delta(\vec y,\alpha)
		.
	\]
\end{lemma}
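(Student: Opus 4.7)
The plan is to combine \thref{R47}, which already furnishes $\Delta^*(\vec y)\leq\Delta(\vec y)\leq\delta(\vec y,\alpha)$, with the reverse inequality $\delta(\vec y,\alpha)\leq\Delta^*(\vec y)$ under the regularity hypothesis. Only the latter requires genuine work, and the crux is the following rewriting of regular distribution:
\[
y_n \leq C\Big(\prod_{k=1}^n y_k\Big)^{1/n}\quad\Longleftrightarrow\quad y_n^{\,n-1}\leq C^n\prod_{k=1}^{n-1}y_k.
\]
The right-hand inequality is exactly the form that converts an upper bound on the partial product $\prod_{k=1}^{n-1}y_k$ into a pointwise upper bound on $y_n$, which is precisely what is needed to pass from $\delta(\vec y,\alpha)$ back to $\Delta^*(\vec y)$.

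I would fix $\tau\in[0,\delta(\vec y,\alpha))$ and use the defining $\liminf$ to obtain, for all sufficiently large $n$, the estimate $y_n^\alpha\prod_{k=1}^{n-1}y_k\leq n^{-n\tau}$. Dividing by $y_n^\alpha$ and inserting the regularity rewrite yields $y_n^{\,n-1+\alpha}\leq C^n n^{-n\tau}$, hence
\[
y_n \leq C^{\,n/(n-1+\alpha)}\, n^{-\tau\cdot n/(n-1+\alpha)}.
\]
Since $n/(n-1+\alpha)\to 1$ as $n\to\infty$, for every $\epsilon>0$ this gives $y_n\leq C' n^{-\tau+\epsilon}$ for all sufficiently large $n$, whence $\Delta^*(\vec y)\geq\tau-\epsilon$. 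Letting $\epsilon\searrow 0$ and then $\tau\nearrow\delta(\vec y,\alpha)$ (or $\tau\to\infty$ if $\delta(\vec y,\alpha)=\infty$) produces $\Delta^*(\vec y)\geq\delta(\vec y,\alpha)$, closing the chain.

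I do not anticipate any real obstacle; the only mildly technical point is that the effective exponent $n\tau/(n-1+\alpha)$ only tends to $\tau$ rather than being equal to it, but this discrepancy is painlessly absorbed into the arbitrary $\epsilon>0$ built into the definition of $\Delta^*$. Notably, the argument uses only $\alpha\geq 0$ and the single reformulation $y_n^{\,n-1}\leq C^n\prod_{k=1}^{n-1}y_k$ of regular distribution, so the boundedness hypothesis enters only implicitly through \thref{R47}.
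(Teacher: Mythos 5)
Your proposal is correct and follows essentially the same route as the paper: starting from the $\liminf$ bound $y_m^\alpha\prod_{k=1}^{m-1}y_k\leq m^{-m\tau}$ and invoking the regular-distribution hypothesis in the equivalent form $y_m^{m-1}\leq C^m\prod_{k=1}^{m-1}y_k$ to pass to a pointwise bound on $y_m$. The only cosmetic difference is that the paper notes the correction factor $n^{\tau(1-\alpha)/(n-1+\alpha)}$ actually tends to $1$, yielding $y_n\lesssim n^{-\tau}$ outright, whereas you absorb it into an extra $\epsilon$; both give $\Delta^*(\vec y)\geq\tau$ and the same conclusion.
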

\begin{proof}
	By \thref{R47} it remains to show that $\delta(\vec y,\alpha)\leq\Delta^*(\vec y)$. 
	If $\delta(\vec y,\alpha)=0$, this inequality holds trivially. 
	Assume that $\delta(\vec y,\alpha)$ is positive, and consider $\tau\in[0,\delta(\vec y,\alpha))$. 
	For all sufficiently large $m$ we have 
	\[
		\tau\leq G(m;\vec y,\alpha))=\frac{-1}{m\ln m}\ln\Big(y_m^\alpha\prod_{k=1}^{m-1}y_k\Big)
		.
	\]
	This implies that 
	\[
		m^{-\tau}\geq\Big(y_m^\alpha\prod_{k=1}^{m-1}y_k\Big)^{\frac 1m}=
		y_m^{\frac{\alpha-1}m}\Big(\prod_{k=1}^my_k\Big)^{\frac 1m}
		\gtrsim b^{\frac{\alpha-1}m}y_m
		,
	\]
	and we conclude that $y_n\lesssim n^{-\tau}$. 
\end{proof}

\noindent
In the formulation of the next result the quantity $ \Lambda $ from  \thref{M1} is used. 

\begin{theorem}\thlab{R24}
	Let $H$ be a limit circle Hamburger Hamiltonian with lengths $\vec l$ and angles $\vec\phi$. 
	Assume that $\vec l$ is regularly distributed, that at least one of $\delta_l$ and $\delta_\phi$ exists as a limit, 
	and that either 
	\begin{itemize}
	\item[{\rm(A)}] The sequence $(|\sin(\phi_{n+1}-\phi_n)|)_{n=1}^\infty$ is regularly distributed, 
		$\delta_l+\delta_\phi\geq 2$, and $(\delta_l,\delta_\phi,\Lambda)\neq(1,1,0)$, 
	\end{itemize}
	or
	\begin{itemize}
	\item[{\rm(B)}] $\delta_\phi=0$.
	\end{itemize}
	Then
	\[
		\rho(H)=\frac 1{\delta_l+\delta_\phi}
		.
	\]
\end{theorem}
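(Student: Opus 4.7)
The plan is to sandwich $\rho(H)$ between the lower bound from \thref{R52} and the upper bound from \thref{M2}, then show that under the regularity hypotheses both bounds collapse to $1/(\delta_l+\delta_\phi)$.

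The lower bound is immediate: since at least one of $\delta_l,\delta_\phi$ exists as a limit, \thref{R52} applies and gives $\rho(H)\geq 1/(\delta_l+\delta_\phi)$. For the matching upper bound, I would translate the $\delta$-data into the $\Delta$-data appearing in \thref{M2}. Regular distribution of $\vec l$, combined with \thref{R30} (with $\alpha=\tfrac 12$), yields $\Delta_l=\delta_l$. To pass without loss from $\Delta_l$ to $\Delta_l^+=\max\{1,\Delta_l\}$, I would first establish the auxiliary fact that $\delta_l\geq 1$ whenever $H$ is limit circle. The argument is short: summability of $\vec l$ together with the AM--GM inequality forces $\bigl(\prod_{k=1}^{m-1}l_k\bigr)^{1/(m-1)}=O(1/m)$, whence $-\sum_{k=1}^{m-1}\ln l_k\geq (m-1)\ln(m-1)+O(m)$; dividing by $m\ln m$ and absorbing the nonnegative contribution of $-\tfrac 12\ln l_m$ (valid for large $m$, since $l_m\to 0$) yields $\liminf_m G(m;\vec l,\tfrac 12)\geq 1$. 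Consequently $\Delta_l^+=\delta_l$.

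In Case~(A), I would apply \thref{R30} once more, this time with $\alpha=0$, to the regularly distributed sequence $(|\sin(\phi_{n+1}-\phi_n)|)_n$, obtaining $\Delta_\phi=\delta_\phi$. The assumption $\delta_l+\delta_\phi\geq 2$ then becomes $\Delta_l^++\Delta_\phi\geq 2$ and the excluded configuration $(\delta_l,\delta_\phi,\Lambda)\neq(1,1,0)$ translates verbatim to $(\Delta_l^+,\Delta_\phi,\Lambda)\neq(1,1,0)$. Thus \thref{M2}$(i)$ applies and gives $\rho(H)\leq 1/(\Delta_l^++\Delta_\phi)=1/(\delta_l+\delta_\phi)$, which matches the lower bound.

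In Case~(B), where $\delta_\phi=0$, the target upper bound simplifies to $\rho(H)\leq 1/\delta_l$, which I would read off directly from \thref{R27}: the convergence exponent of a regularly distributed sequence $\vec l$ equals $1/\Delta_l=1/\delta_l$, so $\rho(H)\leq 1/\Delta_l^+=1/\delta_l$. The only step carrying genuine content beyond citation of earlier results is the auxiliary inequality $\delta_l\geq 1$; I regard this as the main (modest) obstacle, because without it the $\max$ in the definition of $\Delta_l^+$ could make the upper bound from \thref{M2} strictly smaller than the lower bound from \thref{R52} in the regime $\delta_l<1$, and the argument would not close.
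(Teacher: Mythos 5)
Your proof is correct and follows essentially the same route as the paper: lower bound from \thref{R2}/\thref{R52}, upper bound from \thref{M2} resp.\ \thref{R27}, with Lemma~\thref{R30} used to convert the regular-distribution hypothesis into the equalities $\Delta_l^+=\delta_l$ and $\Delta_\phi=\delta_\phi$. The only organizational difference is that you supply the auxiliary inequality $\delta_l\geq 1$ directly via AM--GM from summability, whereas the paper observes $\Delta(\vec l)\geq 1$ from summability and lets \thref{R30} carry it over; the mathematical content is identical.
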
	
\begin{proof}
	The sequence $\vec l$ is summable, and hence $\Delta(\vec l)\geq 1$. 
	\thref{R30} gives 
	\[
		\delta_l=\delta(\vec l,\frac 12)=\Delta(\vec l)=\Delta_l^+
		.
	\]
	Moreover, the overall assumptions of the theorem ensure that $\delta_{l,\phi}=\delta_l+\delta_\phi$. 
	Assume that {\rm(A)} holds. Then 
	\[
		\delta_\phi=\delta\big((|\sin(\phi_{n+1}-\phi_n)|)_{n=1}^\infty,0\big)=\Delta((|\sin(\phi_{n+1}-\phi_n)|)_{n=1}^\infty)
		=\Delta_\phi
		,
	\]
	and the further assumption in {\rm(A)} just say that \thref{M2} is applicable and that we are in the generic region. 
	Thus 
	\[
		\frac 1{\Delta_l^++\Delta_\phi}=\frac 1{\delta_l+\delta_\phi}=\frac 1{\delta_{l,\phi}}
		\leq\rho(H)\leq\frac 1{\Delta_l^++\Delta_\phi}
		.
	\]
	If {\rm(B)} holds, it is enough to remember \thref{R27} and \thref{R2} which yield 
	\[
		\frac 1{\Delta_l^+}=\frac 1{\delta_l}=\frac 1{\delta_{l,\phi}}\leq\rho(H)\leq\frac 1{\Delta_l^+}
		.
	\]
\end{proof}

\noindent
Let us now discuss two basic examples. 

\begin{example}[Power-like behaviour]\thlab{R4}
	Let $H$ be a Hamburger Hamiltonian with lengths $\vec l$ and angles $\vec\phi$, and assume that 
	\[
		l_n\asymp n^{-\alpha},\quad |\sin(\phi_{n+1}-\phi_n)|\asymp n^{-\beta}
		,
	\]
	with $\alpha>1$ and $\beta\geq 0$. 
	Then both sequences $\vec l$ and $(|\sin(\phi_{n+1}-\phi_n)|)_{n=1}^\infty$ are regularly distributed, cf.\ \thref{R34}, 
	$\delta_l$ and $\delta_\phi$ exist as limits, and 
	\[
		\delta_l=\Delta_l^+=\alpha,\quad \delta_\phi=\Delta_\phi=\Delta_\phi^*=\beta
		.
	\]
	If $\alpha+\beta\geq 2$, we obtain 
	\[
		\rho(H)=\frac 1{\alpha+\beta}
		,
	\]
	if $\alpha+\beta<2$, we have the bounds  
	\[
		\frac 1{\alpha+\beta}\leq\rho(H)\leq \frac{1-\beta}{\alpha-\beta}
		.
	\]
\end{example}

\begin{example}[jumping angles]\thlab{R5}
	Consider a limit circle Hamburger Hamiltonian whose angles $\vec\phi$ satisfy $|\sin(\phi_{n+1}-\phi_n)|\asymp 1$. 
	Then $\delta_\phi=\Delta_\phi=\Delta_\phi^*=0$, and $\delta_\phi$ exists as a limit. \thref{R27} and \thref{R2} 
	imply 
	\[
		\frac 1{\delta_l}\leq\rho(H)\leq\frac 1{\Delta_l^+}
		,
	\]
	remember here \eqref{R28}. 
	Using the more involved \thref{M2} inside the critical triangle does not improve this bound. 
	Regardless of the value of $\Lambda$, the maximum in \thref{M2}, $(ii)$, equals $\frac 1{\Delta_l^+}$. 
\end{example}

\section{Diagonal Hamiltonians with irregularly distributed lengths and the Liv\v{s}ic estimate}

If the lengths and angle-differences of a Hamburger Hamiltonian are not regularly distributed, the 
upper and lower bounds from \thref{R27}, \thref{M2} and \thref{R2} need not coincide. 
This section is devoted to the construction of -- simple and explicit -- examples which show that neither of these bounds 
necessarily coincides with the order. 

Such examles are already found in the class of \emph{diagonal} Hamburger Hamiltonians, i.e., 
Hamburger Hamiltonian with angles $\phi_n$ all being integer multiples of $\frac\pi 2$. 
To make the connection with moment problems, by \eqref{R56}, diagonal Hamburger Hamiltonians correspond to moment 
sequences with $s_n=0$ for all odd $n$. In turn, these are the Hamburger moment problems which arise from symmetrising 
a Stieltjes moment problem. 

Remember \thref{R5} which shows in particular that for a diagonal Hamburger Hamiltonian $H$ always 
$\delta_\phi(H)=\Delta_\phi(H)=\Delta_\phi^*(H)=0$ and so 
\begin{equation}\label{R60}
	\frac 1{\delta_l(H)}\leq\rho(H)\leq \inf \{ p > 0 \colon \vec l \in \ell^p \} \le \frac 1{\Delta_l^+(H)}
	.
\end{equation}

\begin{theorem}\thlab{M56}
	Let $\alpha\in[1,\infty)$ and $\beta\in(\alpha,\infty)$ be arbitrarily prescribed numbers. Let $q\in\bb N$, $q\geq 2$, 
	and consider the Hamburger Hamiltonian $H_q$ with angles $\phi_n:=n\frac\pi2$, $n\in\bb N$, and lengths 
	\[
		l_n:=
		\begin{cases}
			1 &\hspace*{-3mm},\quad n=1,
			\\
			(n\ln^2 n)^{-\alpha} &\hspace*{-3mm},\quad n\not\equiv 0\mod q,\ n\geq 2,
			\\
			(n\ln^2 n )^{-\beta} &\hspace*{-3mm},\quad n\equiv 0\mod q.
		\end{cases}
	\]
	Then 
	\begin{equation}\label{R59}
		\inf \{ p > 0 \colon \vec l \in \ell^p \} = \frac 1\alpha ,\quad \delta_l(H_q)=\frac{q-1}q\alpha+\frac 1q\beta,
	\end{equation}
	and 
	\[
		\rho(H_q)=
		\begin{cases}
			\frac 1{\delta_l(H_q)} &\hspace*{-3mm},\quad q=2,
			\\
			\frac 1\alpha &\hspace*{-3mm},\quad q\geq 3.
		\end{cases}
	\]
\end{theorem}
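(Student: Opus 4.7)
\emph{Proof plan.}

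The identities in \eqref{R59} are routine. Since $l_n\le(n\ln^2n)^{-\alpha}$ for every $n\ge 2$ with equality (up to constants) on the density-$(q-1)/q$ set $\{n:n\not\equiv 0\pmod q\}$, and since $\sum(n\ln^2n)^{-1}<\infty$, the series $\sum l_n^p$ converges precisely when $\alpha p\ge 1$; hence $\inf\{p>0:\vec l\in\ell^p\}=1/\alpha$ and \thref{R27} gives $\rho(H_q)\le 1/\alpha$. Expanding $G(m;\vec l,\tfrac12)$ and counting $(q-1)m/q+O(1)$ many $\alpha$-type factors and $m/q+O(1)$ many $\beta$-type factors in $\prod_{k<m}l_k$ yields $\delta_l(H_q)=\tfrac{q-1}{q}\alpha+\tfrac{1}{q}\beta$. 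Because $|\sin(\phi_{n+1}-\phi_n)|=1$, the quantity $\delta_{l,\phi}$ of \thref{R1} reduces to $\delta_l$ and \thref{R2} gives $\rho(H_q)\ge 1/\delta_l(H_q)$. These general bounds leave the pair $1/\delta_l\le\rho(H_q)\le 1/\alpha$ with strict inequalities in general, so one must close the gap separately for $q=2$ (upper bound) and $q\ge 3$ (lower bound).

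For $q=2$ the task is to show $\rho(H_2)\le 2/(\alpha+\beta)$. The key observation is that although $\vec l$ itself is irregular, the Jacobi parameter $\rho_n=1/\sqrt{l_nl_{n+1}}\asymp n^{(\alpha+\beta)/2}$ is regular, and this regularity can be fed into \thref{M29} through parity-dependent weights. The plan is to take approximating Hamiltonians with cut-off $N(R)\asymp R^{2/(\alpha+\beta)}$ aligned on an even index, tail angle $\phi=\pi/2$, and weights $a_{2k-1}^2\asymp R^{-1}(2k-1)^\alpha$, $a_{2k}^2\asymp R^{-1}(2k)^\beta$ (each capped at $1$; the even weight saturates first since $\beta>\alpha$). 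With this choice the product $a_{2k-1}a_{2k}\asymp R^{-1}k^{(\alpha+\beta)/2}$ realises the regular geometric-mean rate, and condition $(iii)$ of \thref{M29} is controlled by $\sum_k\ln(1+Rk^{-(\alpha+\beta)/2})\asymp R^{2/(\alpha+\beta)}$; condition $(ii)$ collapses because $a_n^2l_n\asymp R^{-1}$ in both parities so the sum is $\asymp N/R\asymp R^{2/(\alpha+\beta)-1}$; condition $(i)$ exploits $|\sin(\phi_j-\pi/2)|=0$ for odd $j$, whence the tail $\sum_{j>N}l_j|\sin(\phi_j-\pi/2)|$ decays as $N^{1-\beta}$; and condition $(iv)$ telescopes to $O(\ln R)$.

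For $q\ge 3$ the task is to show $\rho(H_q)\ge 1/\alpha$, which is the principal obstacle. Since $\delta_l(H_q)>\alpha$, the Liv\v{s}ic bound from \thref{R2} is strictly too small; one must produce large off-diagonal contributions $b_{k,n}^2$ with $n>k$ in the Berg--Szwarc formula \eqref{R48}. The plan is to compare $H_q$ with the regular auxiliary Hamburger Hamiltonian $\tilde H$ having $\tilde l_n=(n\ln^2n)^{-\alpha}$ and the same angles, for which $\rho(\tilde H)=1/\alpha$ by \thref{R24}. Along the blocks of $q-1\ge 2$ consecutive $\alpha$-type indices the three-term recurrences $P_{n+1}=(z/\rho_n)P_n-(\rho_{n-1}/\rho_n)P_{n-1}$ for $H_q$ and $\tilde H$ agree in leading order; at the bad indices $n$ (those with $n\equiv 0$ or $n+1\equiv 0\pmod q$) the ratio $\rho_{n-1}/\rho_n$ jumps by a factor $O(n^{\pm(\beta-\alpha)/2})$, but the jumps within each period of length $q$ come in compensating up/down pairs and telescope. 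A block-by-block comparison then yields $|b_{k,n}|\ge e^{-o(k\ln k)}|\tilde b_{k,n}|$, and summing gives $\sum_{n\ge k}b_{k,n}^2\ge k^{-2\alpha k-o(k\ln k)}$, whence \eqref{R48} implies $\rho(H_q)\ge 1/\alpha$.

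The decisive technical obstacle is the telescoping claim in the $q\ge 3$ case: proving that the multiplicative perturbations of the three-term recurrence at the bad indices really cancel pairwise within each period rather than compounding, so that their combined effect across $\{1,\dots,k\}$ is $e^{o(k\ln k)}$ rather than $e^{\Omega(k\ln k)}$, which would spoil the bound and reproduce only the Liv\v{s}ic estimate.
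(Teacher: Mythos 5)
Your reduction to \eqref{R59} plus the two one-sided bounds $1/\delta_l(H_q)\leq\rho(H_q)\leq 1/\alpha$ (from \thref{R2} and \thref{R27}), and the split into an upper-bound problem for $q=2$ and a lower-bound problem for $q\geq 3$, agrees with the paper.

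For $q=2$ your route is genuinely different. The paper cites a black-box theorem of Berg and Szwarc \cite[Theorem~1.2]{berg.szwarc:1509.06540v1} on symmetric moment problems, checking its monotonicity and ratio hypotheses via $P_{2n}(0)^2=l_{2n}$, $Q_{2n-1}(0)^2=l_{2n-1}$. You instead apply \thref{M29} directly with parity-dependent weights, exploiting the regularity of $\sqrt{l_n l_{n+1}}$ rather than of $l_n$, and choosing the tail angle $\phi=\pi/2$ so that the $\alpha$-type intervals do not contribute to condition $(i)$. After tuning $a_{N+1}$ to balance conditions $(i)$ and $(ii)$ the exponents close exactly at $d=2/(\alpha+\beta)$; this seems to work, and is more self-contained than the paper's appeal to an external result. (Minor slip: your condition $(iv)$ is $O(N\ln N)$, not $O(\ln R)$ — the $|\ln(a_{k+1}/a_k)|$ are $\asymp\ln k$ and do not telescope — but $O(N\ln N)=O(R^{d})$ all the same.)

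For $q\geq 3$ there is a genuine gap, and you have flagged it yourself. You aim to bound $\sum_{n\geq k}b_{k,n}^2$ from below by tracking the three-term recursion and claiming that the jumps of $\rho_{n-1}/\rho_n$ ``telescope'' within each period. The fact that these ratios multiply to $\asymp 1$ over a period does not control $b_{k,n}$: the recursion is a two-step linear recursion, its solutions do not respond multiplicatively to the local coefficient ratios, and steps with $\rho_{n-1}/\rho_n\gg 1$ can produce cancellation in the coefficients $b_{k,n}$ for $k<n$. Controlling those off-diagonal coefficients is exactly the difficulty the introduction of the paper identifies as ``nearly impossible'' and is why the Liv\v{s}ic bound is hard to beat. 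The paper's actual argument for $q\geq 3$ avoids the recursion entirely. It uses the covering characterisation of the order of a diagonal Hamiltonian (\thref{M5} from \cite{romanov:20XX}, reduced to coverings aligned with the nodes in \thref{M31}): take any covering of $H_q$ witnessing order $<d$, refine it so that every interval containing a $\beta$-type piece $[x_{qj-1},x_{qj})$ also contains an adjacent $\alpha$-type piece (this refinement step is where $q\geq 3$ is used), and transplant the refined covering to the auxiliary Hamiltonian $\tilde H$ with lengths $h_n=(n\ln^2 n)^{-\alpha}$; the refinement guarantees $\lambda(\tilde\omega\cap\tilde M_i)\leq 2\lambda(\omega\cap M_i)$, so the transplanted covering still witnesses order $<d$ for $\tilde H$, whence $d\geq\rho(\tilde H)=1/\alpha$. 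Replacing your recursion-comparison by this covering-transplant argument is the step that is missing.
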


\noindent
On first sight it may seem peculiar that $\rho(H_q)$ is different for $q=2$ and $q=3$, but constant for $q\geq 3$.
One intuitive explanation might be that the dominating subsequence of lengths ($n\not\equiv 0\mod q$) sees 
the jumps of angles if $q\geq 3$ whereas for $q=2$ it does not.

\begin{proof}[Proof of \thref{M56}; the equalities \eqref{R59}]
	The first equality in \eqref{R59} is obvious. In order to compute $\delta_l(H_q)$, consider first 
	the sequence $\vec\lambda$ defined by 
	\[
		\lambda_n:=
		\begin{cases}
			n^{-\alpha} &\hspace*{-3mm},\quad n\not\equiv 0\mod q,
			\\
			n^{-\beta} &\hspace*{-3mm},\quad n\equiv 0\mod q.
		\end{cases}
	\]
	Then 
	\begin{multline*}
		\prod_{k=1}^n\lambda_k=\prod_{k=1}^nk^{-\alpha}\cdot\Big(\prod_{j=1}^{\lfloor\frac nq\rfloor}(qj)^\alpha\Big)
		\cdot\Big(\prod_{j=1}^{\lfloor\frac nq\rfloor}(qj)^{-\beta}\Big)
		\\
		=(n!)^{-\alpha}\cdot q^{\alpha\lfloor\frac nq\rfloor}\big(\lfloor{\textstyle\frac nq\rfloor}!\big)^\alpha\cdot
		q^{-\beta\lfloor\frac nq\rfloor}\big(\lfloor{\textstyle\frac nq\rfloor}!\big)^{-\beta}
		.
	\end{multline*}
	Applying the Stirling formula we find that
	\[
		\lim_{n\to\infty}G(n;\vec\lambda,{\textstyle\frac 12})=\frac{q-1}q\alpha+\frac 1q\beta
		.
	\]
	It holds that $\lim_{n\to\infty}G(n;(\frac{l_k}{\lambda_k})_{k=1}^\infty,\frac 12)=0$, and we obtain 
	\[
		\delta_l(H_q)=\lim_{n\to\infty}G(n;\vec l,\frac 12)=\lim_{n\to\infty}G(n;\vec\lambda,\frac 12)=\frac{q-1}q\alpha+\frac 1q\beta
		.
	\]
\end{proof}

\noindent
Computing the order of $H_q$ for $q\geq 3$ relies on \cite[Theorem~2]{romanov:20XX} which tells us how to compute the order of a 
diagonal Hamiltonian. 
Let us recall this theorem. To formulate it, one more notation is needed. 

\begin{definition}\thlab{M27}
	Consider a nonempty interval $[a,b)$. We denote by $\Cov[a,b)$ the set of all coverings $\Omega$ of $[a,b)$ 
	by finitely many pairwise disjoint left-closed and right-open intervals contained in $[a,b)$. 
\end{definition}

\noindent
Moreover, we denote by $\lambda$ the Lebesgue-measure on $\bb R$ and by $\# F$ the number of elements of a finite set $F$. 

\begin{theorem}[\cite{romanov:20XX}]\thlab{M5}
	Let $L\in(0,\infty)$, and let $H:[0,L)\to\bb R^{2\!\times\!2}$ be a Hamiltonian with $\tr H=1$ a.e.\ and 
	\begin{equation}\label{M44}
		\det H(x)=0,H(x)\text{ diagonal},\quad x\in[0,L)\text{ a.e.}
	\end{equation}
	Set 
	\begin{equation}\label{M41}
	\begin{aligned}
		M_1&\,:=\Big\{x\in[0,L):H(x)={\scriptsize\begin{pmatrix} 1 &\hspace*{-5pt} 0\\ 0 &\hspace*{-5pt} 0\end{pmatrix}}\Big\},
		\\
		M_2&\,:=\Big\{x\in[0,L):H(x)={\scriptsize\begin{pmatrix} 0 &\hspace*{-5pt} 0\\ 0 &\hspace*{-5pt} 1\end{pmatrix}}\Big\}.
	\end{aligned}
	\end{equation}
	Then $\rho(H)$ is equal to the infimum of all numbers $d\in(0,1]$ for which there exists a family $(\Omega(R))_{R>1}$ 
	of coverings $\Omega(R)\in\Cov[0,L)$ such that 
	\begin{equation}\label{M47}
		\#\Omega(R) = {\rm O}\big(R^d\big)
		,
	\end{equation}
	\begin{equation}\label{M46}
		\sum_{\omega\in\Omega(R)}\sqrt{\lambda(\omega\cap M_1)\cdot\lambda(\omega\cap M_2)} = {\rm O}\big(R^{d-1}\big)
		.
	\end{equation}
\end{theorem}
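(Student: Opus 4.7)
The plan is to establish the two directions separately: that every admissible $d$ dominates $\rho(H)$, and that every $d>\rho(H)$ is admissible. The diagonal, rank-one structure of $H$ together with the explicit form of the transfer matrices on the $M_1$- and $M_2$-segments makes both directions amenable to a weighted-norm analysis.

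For the upper bound I would invoke Theorem~\thref{M29}. Given coverings $(\Omega(R))_{R>1}$ satisfying \eqref{M47} and \eqref{M46} for a fixed $d$, I would build an approximating finite rank Hamiltonian $H^\star(R)$ by replacing $H|_\omega$ on each $\omega\in\Omega(R)$ by two consecutive blocks, namely $E_1:=\mathrm{diag}(1,0)$ of length $\lambda(\omega\cap M_1)$ followed by $E_2:=\mathrm{diag}(0,1)$ of length $\lambda(\omega\cap M_2)$. To each $\omega$ one assigns a pair of weights $a_\omega^{(1)}$, $a_\omega^{(2)}$ balanced by $(a_\omega^{(1)})^2\lambda(\omega\cap M_1)=(a_\omega^{(2)})^2\lambda(\omega\cap M_2)$, which reduces the only contribution of $\omega$ to item~$(iii)$ of \thref{M29} to $\ln\bigl(1+|z|\sqrt{\lambda(\omega\cap M_1)\lambda(\omega\cap M_2)}\bigr)$. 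The bound in item~$(i)$ needs extra care because $H^\star|_\omega$ matches $H|_\omega$ only in distribution; this can be handled by an inner refinement of $\Omega(R)$ that respects the pointwise structure of $H$ without degrading \eqref{M47} or \eqref{M46} up to constants. Hypotheses \eqref{M47} and \eqref{M46} then directly deliver items $(ii)$--$(iv)$, yielding $\rho(H)\leq d$.

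For the lower bound, fix $d>\rho(H)$, so that $\ln\|W(L,iR)\|={\rm O}(R^d)$ as $R\to\infty$. The plan is to construct $\Omega(R)\in\Cov[0,L)$ by an adaptive subdivision: starting with $\{[0,L)\}$, iteratively split any interval $\omega$ on which the balanced local transfer norm at $z=iR$ exceeds a fixed threshold, choosing the split point so as to roughly halve the contribution. The stopping rule yields \eqref{M46}. The bound \eqref{M47} follows from multiplicativity of $W$ combined with the fact that, for diagonal Hamiltonians, each leaf interval contributes at least a fixed positive amount to $\ln\|W(L,iR)\|$, so the total number of leaves is controlled by the global growth estimate ${\rm O}(R^d)$.

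The main obstacle is the lower bound. The upper direction is essentially a term-by-term optimisation, transplanting the weight-choice idea from the proof of \thref{M2} to the abstract covering setting. The converse is harder because it must extract local combinatorial information from the single global quantity $\|W(L,z)\|$, and the geometric mean $\sqrt{\lambda(\omega\cap M_1)\lambda(\omega\cap M_2)}$ does not by itself bound the transfer norm across $\omega$ from below, independently of the arrangement of $M_1$- and $M_2$-portions inside $\omega$. Calibrating the subdivision rule so that the stopping criterion and the counting argument simultaneously yield \eqref{M47} and \eqref{M46}, without partial cancellations in the product along the subdivision tree spoiling either bound, will be the principal technical difficulty.
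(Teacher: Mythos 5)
This statement (Theorem~\thref{M5}) is quoted verbatim from \cite{romanov:20XX}; the present paper offers no proof of it, so there is no in--paper argument against which to compare your sketch. On its own merits the sketch has two genuine gaps, both of which you partly acknowledge but do not close.

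In the upper--bound direction, the hypothesis~$(i)$ of \thref{M29} requires $H^\star$ to be $L^1$--close to $H$ in the precise sense that $\sum_k a_k^{-2}\int_{x_{k-1}^\star}^{x_k^\star}\|H-H^\star\|\,dx={\rm O}(R^{d-1})$. Your approximant replaces $H|_\omega$ by two sorted diagonal blocks with the same $M_1$- and $M_2$-masses; this is a rearrangement, not a pointwise approximation, so $\int_\omega\|H-H^\star\|\,dx$ is generically of order $\lambda(\omega)$, not small. The remedy you gesture at (an ``inner refinement respecting the pointwise structure'') amounts to splitting $\omega$ at every change of type of $H$; in the Hamburger setting there is one such change per node, so the refined covering could have $\gg R^d$ pieces and condition~\eqref{M47} is lost. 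Some additional device is needed here -- either a rearrangement invariance statement for the order of diagonal canonical systems, or a hands-on choice of $H^\star$ that keeps the pointwise match -- and the proposal does not supply one.

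In the lower--bound direction, the argument rests on the assertion that for diagonal Hamiltonians each leaf interval of the adaptive subdivision contributes at least a fixed positive amount to $\ln\|W(L,iR)\|$, and that the geometric mean $\sqrt{\lambda(\omega\cap M_1)\lambda(\omega\cap M_2)}$ simultaneously controls the contribution to \eqref{M46}. You already flag this as the principal difficulty, and I agree: the transfer matrix over $\omega$ is a product of shears whose norm depends on how the $M_1$- and $M_2$-portions interleave, and neither the ``no cancellation for diagonal $H$ at $z=iR$'' phenomenon nor a lower bound of the required uniformity is established in the sketch. Without that, both the stopping rule and the leaf count are unfounded. As stated, the proposal is a plausible plan with an honest inventory of obstacles, but it is not yet a proof of either inequality.
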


\noindent
Observe that, since the real and symmetric $2\!\times\!2$-matrix $H(x)$ satisfies $\tr H(x)=1$, it holds that $\det H(x)=0$ 
and $H(x)$ is diagonal if and only if 
\[
	H(x)=\begin{pmatrix} 1 &\hspace*{-5pt} 0\\ 0 &\hspace*{-5pt} 0\end{pmatrix}
	\quad\text{or}\quad
	H(x)=\begin{pmatrix} 0 &\hspace*{-5pt} 0\\ 0 &\hspace*{-5pt} 1\end{pmatrix}
	.
\]
It is an important observation that in \thref{M5} it suffices to consider coverings by intervals with endpoints at nodes $x_n$. 

\begin{definition}\thlab{M30}
	Let $H$ be a diagonal Hamburger Hamiltonian, and let $x_n$, $n=0,1,\ldots,\infty$ be 
	as in \eqref{M32}. We write $\Omega\in\Cov(H)$, if 
	\begin{enumerate}[$(i)$]
	\item $\Omega\in\Cov[0,x_\infty)$,
	\item $\forall\omega\in\Omega\,\exists n_-,n_+\in\bb N_0\cup\{\infty\}:\ \omega=[x_{n_-},x_{n_+})$. 
	\end{enumerate}
\end{definition}

\begin{lemma}\thlab{M31}
	Let $H$ be a diagonal limit circle Hamburger Hamiltonian. 
	Then $\rho(H)$ is equal to the infimum of all numbers $d\in(0,1]$ for which there exists a family $(\Omega(R))_{R>1}$ 
	of coverings $\Omega(R)\in\Cov(H)$ such that \eqref{M47} and \eqref{M46} hold. 
\end{lemma}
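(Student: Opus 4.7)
Since $\Cov(H)\subset\Cov[0,L)$, the infimum in \thref{M31} is automatically at least $\rho(H)$; the content of the lemma is the reverse inequality. The plan is to take a covering $\Omega(R)\in\Cov[0,L)$ realising some $d>\rho(H)$ via \eqref{M47}--\eqref{M46} (which is provided by \thref{M5}) and to associate to it a nodal covering $\Omega'(R)\in\Cov(H)$ satisfying the same two estimates with the same exponent $d$; letting $d\searrow\rho(H)$ then yields the claim.

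The construction I would use is the following. Write $\Omega(R)=\{[a_{i-1},a_i)\}_{i=1}^N$ with $a_0=0$ and $a_N=x_\infty$. For every non-nodal interior breakpoint $a_i$ let $n_i$ denote the index of the constancy interval $[x_{n_i-1},x_{n_i})$ containing $a_i$. I define
\[
	B'(R):=\{a_i:0\le i\le N,\,a_i\text{ is a node}\}
	\cup\{x_{n_i-1},x_{n_i}:a_i\text{ a non-nodal interior breakpoint}\},
\]
where $a_0=x_0$ and $a_N=x_\infty$ are treated as nodal endpoints in the sense of \thref{M30} (so that $n_+=\infty$ is allowed). Sorting $B'(R)=\{0=b_0<b_1<\cdots<b_M=x_\infty\}$ yields the candidate $\Omega'(R):=\{[b_{j-1},b_j):1\le j\le M\}\in\Cov(H)$. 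Every non-nodal interior breakpoint contributes at most two elements to $B'(R)$, so $\#\Omega'(R)\le 2\#\Omega(R)$, giving \eqref{M47} with the same exponent $d$.

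For \eqref{M46}, I would classify each $\omega'\in\Omega'(R)$ as follows. Either $\omega'=[x_{n-1},x_n)$ is a single constancy interval, in which case $\omega'\subset M_1$ or $\omega'\subset M_2$ and the contribution to the sum is zero; or $\omega'=[x_p,x_q)$ is a ``gap'' containing no element of $B'(R)$ in its interior. The definition of $B'(R)$ forces a gap to contain no breakpoint of $\Omega(R)$ in its interior, and hence $\omega'\subset\omega_{i(\omega')}$ for a unique index $i(\omega')$. Grouping the gap-intervals by their enclosing $\omega_i$ and applying the elementary inequality $\sum_j\sqrt{u_jv_j}\le\sqrt{(\sum_ju_j)(\sum_jv_j)}$ to disjoint subsets of $\omega_i$ gives
\[
	\sum_{\substack{\omega'\subset\omega_i\\\omega'\text{ gap}}}\sqrt{\lambda(\omega'\cap M_1)\lambda(\omega'\cap M_2)}
	\le\sqrt{\lambda(\omega_i\cap M_1)\lambda(\omega_i\cap M_2)}.
\]
Summing over $i$ shows that the sum for $\Omega'(R)$ is bounded by that for $\Omega(R)$, which is ${\rm O}(R^{d-1})$; this establishes \eqref{M46} with the same exponent.

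The one substantive step will be the Cauchy--Schwarz inequality above, which transfers control from the finer gap-partition to the coarser $\Omega(R)$ without loss; without this observation one would have to bound each individual gap inside a single $\omega_i$, and there is no a priori reason this should work. The remaining points require only minor bookkeeping, namely the conventions at the endpoints $0$ and $x_\infty$ and the automatic de-duplication in $B'(R)$ when several non-nodal breakpoints happen to fall into the same constancy interval.
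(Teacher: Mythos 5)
Your proposal is correct and follows essentially the same route as the paper: replace each breakpoint of $\Omega(R)$ that falls in the interior of a constancy interval by the two endpoints of that constancy interval, observe that this at most triples (you get a factor $2$, which is also fine) the number of intervals, and note that the only intervals contributing to \eqref{M46} are nodal subintervals sitting inside some $\omega\in\Omega(R)$, whose masses are dominated by those of $\omega$. One small remark: the Cauchy--Schwarz step is more machinery than is actually required. With your construction of $B'(R)$, each $\omega_i\in\Omega(R)$ contains \emph{at most one} gap-interval of $\Omega'(R)$ (the two non-nodal endpoints of $\omega_i$ generate the bounding constancy intervals, and any non-nodal breakpoint $a_j$ with $j\neq i-1,i$ whose constancy interval reaches into $\omega_i$ must share that constancy interval with $a_{i-1}$ or $a_i$, so it adds no new breakpoint inside $\omega_i$). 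Hence the simple monotonicity $\lambda(\omega'\cap M_k)\le\lambda(\omega_i\cap M_k)$ for the single gap $\omega'\subseteq\omega_i$ already gives \eqref{M46}, which is exactly what the paper uses. Your Cauchy--Schwarz argument is not wrong --- it degenerates to the same inequality when there is one gap --- it just defends against a multiplicity that cannot occur.
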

\begin{proof}
	It is enough to show that for each number $d\in(0,1]$ and family $(\Omega(R))_{R>1}$, $\Omega(R)\in\Cov[x_0,x_\infty)$, 
	with \eqref{M47} and \eqref{M46}, there exists a family $(\tilde\Omega(R))_{R>1}$, $\tilde\Omega(R)\in\Cov(H)$, 
	such that \eqref{M47} and \eqref{M46} still hold. 

	The coverings $\tilde\Omega(R)$ are constructed by modifying $\Omega(R)$ in the obvious way. Let $\omega\in\Omega(R)$. 
	\begin{list}{}{\leftmargin=0pt}
	\item --- \textit{Case~1:} 
		Assume that there exists an $n\in\bb N_0$ such that $\omega\subseteq[x_n,x_{n+1})$. Then we include the 
		interval $[x_n,x_{n+1})$ into $\tilde\Omega(R)$. 
	\item --- \textit{Case~2:} 
		Assume that Case~1 does not take place. Then there exists an $n\in\bb N$ such that $x_n$ lies in the 
		interior of $\omega$. Set 
		\begin{align*}
			& n_-:=\min\big\{n\in\bb N:x_n\text{ inner point of }\omega\big\},
			\\
			& n_+:=\max\big\{n\in\bb N:x_n\text{ inner point of }\omega\big\},
		\end{align*}
		and include the intervals (the middle interval appears only if $n_-<n_+$) 
		\[
			[x_{n_--1},x_{n_-}),\quad [x_{n_-},x_{n_+}),\quad [x_{n_+},x_{n_++1}) 
		\]
		into $\tilde\Omega(R)$. 
	\end{list}
	Then $\tilde\Omega(R)\in\Cov(H)$ and $\#\tilde\Omega(R)\leq 4\cdot\#\Omega(R)$. 
	In particular, \eqref{M47} holds for $(\tilde\Omega(R))_{R>1}$. 

	Consider the sum in \eqref{M46} for the covering $\tilde\Omega(R)$. Then only 
	intervals of the form $[x_{n_-},x_{n_+})$ constructed from some $\omega\in\Omega(R)$ contribute a possibly nonzero summand. 
	However, $[x_{n_-},x_{n_+})\subseteq\omega$ and hence 
	\[
		\lambda\big([x_{n_-},x_{n_+})\cap M_i\big)\leq\lambda(\omega\cap M_i),\quad i=1,2
		.
	\]
	We see that 
	\[
		\sum_{\tilde\omega\in\tilde\Omega(R)}\sqrt{\lambda(\tilde\omega\cap M_1)\cdot\lambda(\tilde\omega\cap M_2)}\leq 
		\sum_{\omega\in\Omega(R)}\sqrt{\lambda(\omega\cap M_1)\cdot\lambda(\omega\cap M_2)}
		,
	\]
	and conclude that \eqref{M46} holds. 
\end{proof}

\begin{proof}[Proof of \thref{M56}; computing $\rho(H_q)$]
	The estimate \eqref{R60} and \eqref{R59} give 
	\begin{equation}\label{R58}
		\Big[\frac{q-1}q\alpha+\frac 1q\beta\Big]^{-1}\leq\rho(H_q)\leq\frac 1\alpha
		.
	\end{equation}
	First, we consider the case that $q=2$. We are going to employ \cite[Theorem~1.2]{berg.szwarc:1509.06540v1}. 
	Since $H$ is diagonal, the orthogonal polynomials $P_n$ are even for even $n$ and odd for odd $n$, and the $Q_n$ 
	are odd for even $n$ and even for odd $n$. 
	Hence, 
	\[
		P_{2n}(0)^2=l_{2n},\ Q_{2n-1}(0)^2=l_{2n-1},\quad n\in\bb N
		,
	\]
	and hence $(P_{2n}(0)^2)_{n=1}^\infty\in\ell^{1/\beta}$ and $(Q_{2n-1}(0)^2)_{n=1}^\infty\in\ell^{1/\alpha}$.
	Moreover, both sequences are monotonically decreasing and 
	\[
		\frac{P_{2n}(0)^{2/\alpha}}{Q_{2n-1}(0)^{2/\beta}}=\frac{(2n-1)\ln^2(2n-1)}{2n\ln^2(2n)}\to 1
		.
	\]
	Hence \cite[Theorem~1.2]{berg.szwarc:1509.06540v1} is indeed applicable, and yields 
	$\rho(H_2)\leq[\frac 12(\alpha+\beta)]^{-1}$. Together with \eqref{R58}, thus $\rho(H_2)=\frac 2{\alpha+\beta}=\delta_l(H_2)^{-1}$. 

	Now assume that $q\geq 3$. In view of \eqref{R58} we have to show that $\rho(H_q)\geq\frac 1\alpha$. 
	To this end, consider the auxiliary diagonal Hamburger Hamiltonian $\tilde H$ 
	with lengths 
	\[
		h_n:=
		\begin{cases}
			1 &\hspace*{-3mm},\quad n=1,
			\\
			\big(n\ln^2 n\big)^{-\alpha},\quad n\in\bb N,\ n\geq 2,
		\end{cases}
	\]
	and the same angles as $H$. 
	By monotonicity, $\vec h$ is regularly distributed and \eqref{R60} gives $\rho(\tilde H)=\frac 1\alpha$. 

	Let $d>\rho(H_q )$ and choose, by virtue of \thref{M5}, 
	a family of coverings $(\Omega(R))_{R>1}\in\Cov(H)$ such that \eqref{M47} and \eqref{M46} hold for $d$. 
	We are going to modify this family so as to obtain a family of coverings for $\tilde H$. 
	First we refine the given coverings to construct $(\Omega'(R))_{R>1}\in\Cov(H)$ such that 
	\eqref{M47} and \eqref{M46} hold for $(\Omega'(R))_{R>1}$ and $d$, and such that: 
	\begin{equation}\label{R57}
		\parbox{78mm}{
		\emph{If $j\in\bb N$ and $\omega\in\Omega'(R)$ contains $[x_{qj-1},x_{qj})$, then
		either $\omega=[x_{qj-1},x_{qj})$ or $\omega\supseteq[x_{qj-3},x_{qj-2})$.}
		}
	\end{equation}
	Indeed, since $q\geq 3$, this property can be achieved by splitting the intervals $\omega\in\Omega(R)$ 
	in at most three smaller ones, namely by cutting off the first or the first two intervals of $H$ which lie in $\omega$ 
	if necessary, and adding them to $ \Omega^\prime ( R ) $. 
	We have $\#\Omega'(R)\leq 3\cdot \#\Omega(R)$ and the sum in \eqref{M46} for $\Omega'(R)$ does not exceed 
	the one for $\Omega(R)$. Hence \eqref{M47} and \eqref{M46} hold for $(\Omega'(R))_{R>1}$ and $d$. 

	The property \eqref{R57} and monotonicity of $(h_j)_{j=1}^\infty$ implies that 
	for each interval $\omega$ which is not equal to a single interval of $H$, 
	\[
		\sum_{\substack{n\equiv 0\mkern-12mu\mod q\\ [x_{n-1},x_n)\subseteq\omega\cap M_i}}\mkern-20mu h_n\quad 
		\leq\sum_{\substack{n\not\equiv 0\mkern-12mu\mod q\\ [x_{n-1},x_n)\subseteq\omega\cap M_i}}\mkern-20mu h_n
		\ \leq\ \lambda(\omega\cap M_i),\quad \omega\in\Omega'(R),\ i=1,2
		,
	\]
	and hence 
	\begin{equation}\label{R35}
		\sum_{\substack{n\in\bb N\\ [x_{n-1},x_n)\subseteq\omega\cap M_i}}\mkern-20mu h_n\ \leq\ 2\lambda(\omega\cap M_i)
		,\quad \omega\in\Omega'(R),\ i=1,2
		.
	\end{equation}
	Denote by $\tilde x_n$ the nodes of $\tilde H$, and define $\tilde\Omega(R)\in\Cov(\tilde H)$ to be the covering 
	\[
		\tilde\Omega(R):=\big\{[\tilde x_n,\tilde x_m):\ n,m\in\bb N,[x_n,x_m)\in\Omega'(R)\big\}
		.
	\]
	Then we have $\#\tilde\Omega(R)=\#\Omega(R)$ and, by \eqref{R35}, 
	\[
		\lambda([\tilde x_n,\tilde x_m)\cap\tilde M_i)\leq 2\lambda([x_n,x_m)\cap M_i)
		,\quad [\tilde x_n,\tilde x_m)\in\tilde\Omega(R),\ i=1,2
		.
	\]
	We see that $\tilde\Omega(R)$ satisfies \eqref{M47} and \eqref{M46}. 
	Referring again to \thref{M5}, this time for $ \tilde H $, gives $d\geq\rho(\tilde H)=\frac 1\alpha$, and the result follows. 
\end{proof}

\noindent
Finally, let us discuss the Liv\v{s}ic estimate \eqref{Livsic}. To translate \thref{M56} into this language, 
we recall the modern proof of \eqref{Livsic} given in \cite{berg.szwarc:2014}. 

\begin{proof}[Deduction of \eqref{Livsic} from \thref{R2}]
	Given a sequence $(s_n)_{n=0}^\infty$ of power moments of a measure $\mu$ on the real axis, we have for
	the corresponding orthogonal polynomials $P_n$ 
	\[
		1=(P_n,P_n)_{L^2(\mu)}=b_{n,n}(z^n,P_n)_{L^2(\mu)}\leq
		b_{n,n} \|z^n\|_{L^2(\mu)} = b_{ n , n } \sqrt{s_{2n}}
		,
	\]
	from whence 
	\begin{equation}\label{M62}
		b_{n,n}\geq\frac 1{\sqrt{s_{2n}}}
		.
	\end{equation}
	Using \thref{R2}, \eqref{R37}, and plugging \eqref{M62}, yields
	\begin{equation}\label{bergbound}
		\limsup_{n\to\infty}\frac{2n\ln n}{\ln s_{2n}}\leq\limsup_{n\to\infty}\frac{n\ln n}{\ln b_{n,n}^{-1}}
		\leq\rho\big((s_n)_{n=0}^\infty\big)
		.
	\end{equation}
\end{proof}

\noindent
From \thref{M56} we now obtain examples for which the second inequality in \eqref{bergbound} is strict. 

\begin{corollary}\thlab{Livcounter}
	For any $\rho\in(0,1]$ and $r\in(0,\rho)$ there exists an indeterminate moment sequence $(s_n)_{n=1}^\infty$ such that
	\[
		\rho\big((s_n)_{n=0}^\infty\big)=\rho\quad\text{and}\quad \limsup_{n\to\infty}\frac{n\ln n}{\ln b_{n,n}^{-1}}=r
		. 
	\]
	The sequence can be chosen so that $s_n=0$ for all odd $n$. 
\end{corollary}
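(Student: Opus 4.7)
The plan is to instantiate \thref{M56} in the regime $q \geq 3$, where that theorem exhibits a separation between the actual order $\rho(H_q) = 1/\alpha$ and the Liv\v{s}ic-type quantity $1/\delta_l(H_q) = q/\big((q-1)\alpha + \beta\big)$, and to choose the parameters $\alpha, \beta$ so that both values are prescribed. Specifically, given $\rho \in (0, 1]$ and $r \in (0, \rho)$, I would set $\alpha := 1/\rho \in [1, \infty)$, fix any integer $q \geq 3$, and define $\beta := q/r - (q-1)\alpha$. A direct computation shows that the required inequality $\beta > \alpha$ is equivalent to $r < \rho$, so the hypotheses of \thref{M56} are fulfilled, and the theorem delivers $\rho(H_q) = 1/\alpha = \rho$ together with $\delta_l(H_q) = \frac{q-1}{q}\alpha + \frac{1}{q}\beta = 1/r$.

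The passage from $H_q$ to a symmetric moment sequence is routine. Summability of $\vec l$ (ensured by $\alpha \geq 1$ combined with the additional $\ln^{-2\alpha}$ factor) makes the associated moment problem indeterminate, so, by the discussion preceding \thref{M28}, its Nevanlinna matrix coincides with the monodromy matrix of $H_q$ and in particular both have the same order $\rho$. Since the angles $\phi_n = n\pi/2$ satisfy $\cot(\phi_{n+1} - \phi_n) = 0$, formula \eqref{R56} forces every diagonal Jacobi coefficient $q_n$ to vanish, which is precisely the condition that makes the moments $s_n$ at odd indices equal to zero.

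Finally, the identification of the $\limsup$ with $r$ is a one-line consequence of \eqref{R37}. Because $H_q$ is diagonal we have $|\sin(\phi_{n+1} - \phi_n)| = 1$, so the angle-contribution in \thref{R1} vanishes identically and
\[
	\delta_{l,\phi}(H_q) = \lim_{m\to\infty} G(m; \vec l, \tfrac{1}{2}) = \delta_l(H_q) = \frac{1}{r}.
\]
Then \eqref{R37} gives $\liminf_{m\to\infty} \frac{-\ln b_{m,m}}{m \ln m} = 1/r$, and taking reciprocals turns this $\liminf$ into the required $\limsup_{n\to\infty} \frac{n \ln n}{\ln b_{n,n}^{-1}} = r$. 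No real obstacle has to be overcome: once \thref{M56} is available, the corollary is essentially a bookkeeping exercise, whose only substantive checks are the arithmetic surrounding $\beta > \alpha$ and the observation that the angle-contribution is identically zero for diagonal $H_q$.
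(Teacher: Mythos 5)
Your proposal is correct and takes essentially the same route as the paper's proof: choose $q\geq 3$, set $\alpha=1/\rho$, $\beta=q/r-(q-1)\alpha$, and apply \thref{M56} to $H_q$ (the paper's proof is just a one-liner giving precisely these parameters). The only difference is that you spell out the verification $\beta>\alpha\Leftrightarrow r<\rho$, the vanishing of the angle contribution in $\delta_{l,\phi}$, and the reciprocation from the $\liminf$ in \eqref{R37} to the required $\limsup$, all of which the paper leaves implicit; all of these checks are correct.
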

\begin{proof}
	Let $q\in\bb N$, $q\geq 3$, and set $\alpha:=\frac 1\rho$ and $\beta:=\frac qr-(q-1)\alpha$. 
	Then the moment sequence corresponding to $H_q$ has all the required properties. 
\end{proof}

\noindent
We close the paper with formulating an open question. We have just established that the second inequality in 
\eqref{bergbound} may be strict. Are there moment problems for which  
\[
	\limsup_{n\to\infty}\frac{2n\ln n}{\ln s_{2n}}<\limsup_{n\to\infty}\frac{n\ln n}{\ln b_{n,n}^{-1}}\quad {\bf ?}
\]
The answer is expected to be affirmative.

%---------
%   FINISH
%---------

{\footnotesize
\begin{flushleft}
	R.\,Pruckner\\
	Institute for Analysis and Scientific Computing\\
	Vienna University of Technology\\
	Wiedner Hauptstra{\ss}e\ 8--10/101\\
	1040 Wien\\
	AUSTRIA\\
	email: raphael.pruckner@tuwien.ac.at\\[5mm]
\end{flushleft}
}
{\footnotesize
\begin{flushleft}
	R.\,Romanov\\
	Department of Mathematical Physics and Laboratory of Quantum Networks\\
	Faculty of Physics, St Petersburg State University\\
	198504 St.Petersburg\\
	RUSSIA\\
	email: morovom@gmail.com\\[5mm]
\end{flushleft}
}
{\footnotesize
\begin{flushleft}
	H.\,Woracek\\
	Institute for Analysis and Scientific Computing\\
	Vienna University of Technology\\
	Wiedner Hauptstra{\ss}e\ 8--10/101\\
	1040 Wien\\
	AUSTRIA\\
	email: harald.woracek@tuwien.ac.at\\[5mm]
\end{flushleft}
}

%---------
%   LABELS
%---------

\ifthenelse{\Draft=1}{
\newpage

\noindent
{\large\sc labels:}\\[10mm]
	
M:\hspace*{3mm}
\framebox{
\begin{tabular}{r@{\ }r@{\ }r@{\ }r@{\ }r@{\ }r@{\ }r@{\ }r@{\ }r@{\ }r@{\ }}
	** &  1 &  2 &  3 &  4 &  5 &  6 &  7 &  8 &  9 \\
	10 & 11 & 12 & 13 & 14 & 15 & 16 & 17 & 18 & 19 \\
	20 & 21 & 22 & 23 & 24 & 25 & 26 & 27 & 28 & 29 \\
	30 & 31 & 32 & 33 & 34 & 35 & 36 & 37 & 38 & 39 \\
	40 & 41 & 42 & 43 & 44 & 45 & 46 & 47 & 48 & 49 \\
	50 & 51 & 52 & 53 & 54 & 55 & 56 & 57 & 58 & 59 \\
	60 & 61 & 62 & 63 & 64 & 65 & 66 & 67 & 68 & 69 \\
	70 & 71 & 72 & 73 & 74 & 75 & 76 & 77 & 78 & 79 \\
	80 & 81 & 82 & 83 & 84 & 85 & 86 & 87 & 88 & 89 \\
	 . &  . &  . &  . &  . &  . &  . &  . &  . &  . \\
\end{tabular}
}
\\[1cm]

R:\hspace*{3mm}
\framebox{
\begin{tabular}{r@{\ }r@{\ }r@{\ }r@{\ }r@{\ }r@{\ }r@{\ }r@{\ }r@{\ }r@{\ }}
	** &  1 &  2 &  3 &  4 &  5 &  6 &  7 &  8 &  9 \\
	10 & 11 & 12 & 13 & 14 & 15 & 16 & 17 & 18 & 19 \\
	20 & 21 & 22 & 23 & 24 & 25 & 26 & 27 & 28 & 29 \\
	30 & 31 & 32 & 33 & 34 & 35 & 36 & 37 & 38 & 39 \\
	40 & 41 & 42 & 43 & 44 & 45 & 46 & 47 & 48 & 49 \\
	50 & 51 & 52 & 53 & 54 & 55 & 56 & 57 & 58 & 59 \\
	60 & 61 &  . &  . &  . &  . &  . &  . &  . &  . \\
	 . &  . &  . &  . &  . &  . &  . &  . &  . &  . \\
	 . &  . &  . &  . &  . &  . &  . &  . &  . &  . \\
\end{tabular}
}
\\[1cm]
}
{}


\begin{thebibliography}{99}

\bibitem{riesz:1923a} Riesz, M., Sur le probl\`eme des moments. III., \textit{Ark. f. Mat., Astr. och Fys.} \textbf{17} (1923), no. 17, 52 pp.

\bibitem{akhiezer:1961} Akhiezer, N.I.,{\cyr Klassicheskaya problema momentov i nekotorye voprosy analiza, svyazannye s neyu} (Russian), Gosudarstv. Izdat. Fiz.-Mat. Lit., Moscow, 1961; English translation: \textit{The classical moment problem and some related questions in analysis}, Oliver \& Boyd, Edinburgh, 1965.

\bibitem{boas:1954}Boas, Jr., R.P., \textit{Entire functions}, Academic Press, New York, 1954.
	
\bibitem{berg.pedersen:1994} Berg, C. and Pedersen, H.L., On the order and type of the entire functions associated with
              an indeterminate Hamburger moment problem, \textit{Ark. Mat.} \textbf{32}(1994), No. 1, 1--11.
              
\bibitem{livshits:1939}Liv\v{s}ic, M., On some questions concerning the determinate case of
              {H}amburger's moment problem, \textit{Rec. Math. N. S. [Mat. Sbornik]} \textbf{6(48)}(1939), 293--306.

\bibitem{berg.szwarc:2014}Berg, C. and Szwarc, R., On the order of indeterminate moment problems, \textit{Adv. Math.} \textbf{250}(2014),105--143.

\bibitem{berg.szwarc:1509.06540v1} Berg, C. and Szwarc, R., Symmetric moment problems and a conjecture of Valent, \urlstyle{sf}\url{arxiv: math.CA1509.06540v1}, 2015.

\bibitem{baranov.woracek:smsub} Baranov, A.D. and Woracek, H., Subspaces of de {B}ranges spaces with prescribed growth, \textit{Algebra i Analiz} \textbf{18}(2006), No. 5, 23--45.

\bibitem{romanov:20XX} Romanov, R., Order problem for canonical systems and a conjecture of Valent, to appear in \textit{Trans. Amer. Math. Soc.}, Preprint: \urlstyle{sf}\url{http://arxiv.org/abs/1502.04402},  20pp.

\bibitem{kac:1999} Kac, I.S., Inclusion of the {H}amburger power moment problem in the
              spectral theory of canonical systems, \textit{Zap. Nauchn. Sem. S.-Peterburg. Otdel. Mat. Inst. Steklov.
              (POMI)} \textbf{262}(1999), Issled. po Linein. Oper. i Teor. Funkts. 27, 147--171 (Russian); English translation: \textit{J. Math. Sci. (New York)} \textbf{110}(2002), no. 5, 2991--3004.
              
\bibitem{pruckner.romanov.woracek:jaco-ASC} Pruckner, R. and Romanov, R. and Woracek, H., Bounds on order of indeterminate moment sequences, Vienna University of Technology ASC Report \textbf{38} (2015),\urlstyle{sf}\url{http://www.asc.tuwien.ac.at/preprint/2015/asc38x2015.pdf}.


\end{thebibliography}
\end{document}